\newcommand{\Pic}{\operatorname{Pic}}
\newcommand{\NS}{\operatorname{NS}}
\newcommand{\Spec}{\operatorname{Spec}}
\renewcommand{\lim}{\varprojlim}
\newcommand{\A}{\mathbb{A}}
\newcommand{\G}{\mathbb{G}}
\newcommand{\N}{\mathbf{N}}
\newcommand{\Q}{\mathbf{Q}}
\newcommand{\Z}{\mathbf{Z}}
\newcommand{\Br}{\operatorname{Br}}
\newcommand{\Ker}{\operatorname{Ker}}
\newcommand{\Coker}{\operatorname{Coker}}
\newcommand{\IM}{\operatorname{Im}}
\newcommand{\cl}{\operatorname{cl}}
\renewcommand{\epsilon}{\varepsilon}
\renewcommand{\phi}{\varphi}
\newcommand{\et}{{\operatorname{\acute{e}t}}}
\newcommand{\rd}{{\operatorname{red}}}
\newcommand{\inj}{\hookrightarrow}
\newcommand{\iso}{\overset{\sim}{\longrightarrow}}
\newcommand{\by}{\xrightarrow}
\newtheorem{thm}{Theorem}[section]
\newtheorem{prop}[thm]{Proposition}
\newtheorem{cor}[thm]{Corollary}
\newtheorem{lemma}[thm]{Lemma}
\theoremstyle{definition}
\theoremstyle{remark}
\newtheorem{rk}[thm]{Remark}
\newtheorem{rks}[thm]{Remarks}
\newtheorem{qn}[thm]{Question}
\newcounter{spec}
{\end{list}}%
\numberwithin{equation}{section}
\begin{document}
\title{An approach to the Tate conjecture for surfaces over a finite field}
\author{Bruno Kahn}
\address{FJ-LMI -- IRL2025 CNRS\\
Graduate School of Mathematical Sciences\\
The University of Tokyo\\
3-8-1 Komaba, Meguro, 153-8914\\
Tokyo, Japan}
\email{bruno.kahn@imj-prg.fr}
\date{\today}
\begin{abstract} We give a reformuation of the Tate conjecture for a surface over a finite field in terms of suitable affine open subsets. We then present three attempts to prove this reformulation, each of them falling short.  Interestingly, the last two are related to techniques used in proofs of Gersten's conjecture.
\end{abstract}
\keywords{Tate conjecture, affine varieties}
\subjclass[2020]{14C25,19F27}
\maketitle


\section*{Introduction}

Since the Tate conjecture in codimension $1$ has been reduced to surfaces over the prime field \cite{morrow,ambrosi,kahness}, it is tempting to try to prove it in such case (but see the philosophical comment below). In this note and in the spirit of \cite{kahness}, I present a reformulation for a surface $X$ over a finite field $k$ in terms of affine open subsets $U\subset X$ which verify $\Pic(U)=0$. I then present three ideas, so far unsuccessful, to prove this reformulation.

I thank Olivier Wittenberg, Hiroyasu Miyazaki and Takeshi Saito for their patience in reading or listening to explanations of initial versions of this work.

\subsection*{Philosophical comment} There are so many possible paths towards a proof of the Tate conjecture (for divisors) that there is no guarantee that the present strategy is right. Some positive evidence is that the arguments below strongly use the hypotheses that $\dim X=2$ and that $k$ is finite: M. Artin's theorem on the cohomological dimension of affine schemes, and the procyclicity of the absolute Galois group. Yet no proof comes out so far. Perhaps out of frustration, I thought it worthwhile to make these ideas public nevertheless.

\subsection*{Notation} Let $l$ be a prime number. For any scheme $S$ on which $l$ is invertible and any integers $(i,j)\in \N\times \Z$,  we write $H^i(S,j):=H^i_\et(S,\Q_l/\Z_l(j))$ and $\Br_l(S):=H^2_\et(S,\G_m)\{l\}$ for the $l$-primary part of the cohomological Brauer group.

Let $k$ be a field, $k_s$ a separable closure of $k$ and $G=Gal(k_s/k)$. If $V$ is a $k$-variety, we write $V_s:=V\otimes_k k_s$.

\section{The reformulation}

\begin{thm}\label{t1}  Suppose $k$ finite, and let $X$ be a smooth projective surface over $k$; assume that $G$ acts trivially on $\NS(X_s)
$. Then the following are equivalent:
\begin{enumerate}
\item The Tate conjecture holds for $X$.
\item For any affine open $U\subset X$ such that $\Pic(U)=0$, one has $H^3(U,1)=0$.
\item For any affine open $U\subset X$ such that $\Pic(U)=0$ and any smooth irreducible divisor $Z\subset U$, the map $H^3(U,1)\to H^3(U-Z,1)$ is injective. 
\end{enumerate}
\end{thm}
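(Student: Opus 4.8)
The plan is to reduce (1) to the finiteness of $\Br_l(X)$, equivalently to the equality $\rho=\rho'$ where $\rho=\rg\NS(X_s)$ and $\rho'=\dim_{\Q_l}H^2_\et(X_s,\Q_l(1))^G$: over a finite field, and under the standing hypothesis on $G$, the Tate conjecture for $X$ is precisely the surjectivity of the cycle class map $\NS(X)\otimes\Q_l\to H^2_\et(X_s,\Q_l(1))^G$, and $H^2_\et(X,\Q_l(1))=H^2_\et(X_s,\Q_l(1))^G$ because $\operatorname{cd}_l(G)=1$ and the remaining Frobenius-invariants vanish by weight reasons. The technical heart is a computation of $H^3(U,1)$ for $U$ affine with $\Pic(U)=0$. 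First I would invoke M.~Artin's theorem, $\operatorname{cd}(U)\le 3$, to get $H^4_\et(U,\Z_l(1))=0$; the coefficient sequence $0\to H^3_\et(U,\Z_l(1))\otimes\Q_l/\Z_l\to H^3(U,1)\to H^4_\et(U,\Z_l(1))[l^\infty]\to 0$ then shows $H^3(U,1)$ is \emph{divisible}, of corank $\dim_{\Q_l}H^3_\et(U,\Q_l(1))$. By arithmetic Poincaré duality for $U/k$ this corank equals $\dim_{\Q_l}H^2_{\et,c}(U,\Q_l(1))$. Putting $Y=X\smallsetminus U$ (a proper curve) and using the long exact sequence $\cdots\to H^i_{\et,c}(U,\Q_l(1))\to H^i_\et(X,\Q_l(1))\to H^i_\et(Y,\Q_l(1))\to\cdots$ ($X$ and $Y$ proper), one has $H^1_\et(Y,\Q_l(1))=0$ for weight reasons, so $H^2_{\et,c}(U,\Q_l(1))=\Ker\bigl(H^2_\et(X,\Q_l(1))\to H^2_\et(Y,\Q_l(1))\bigr)$. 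The latter arrow is $\alpha\mapsto(\alpha\cdot Y_j)_j$ over the irreducible components $Y_j$ of $Y$; since $\Pic(U)=0$ the $[Y_j]$ generate $\NS(X)\otimes\Q$, which under the hypothesis on $G$ equals $\NS(X_s)\otimes\Q$, on which the intersection form is non-degenerate (Hodge index). Hence the kernel is the transcendental part of $H^2_\et(X,\Q_l(1))=H^2_\et(X_s,\Q_l(1))^G$, of dimension $\rho'-\rho$, so $H^3(U,1)\cong(\Q_l/\Z_l)^{\rho'-\rho}$. As such $U$ exist (complements of the support of an ample effective divisor whose components generate $\Pic(X)$), this already gives $(1)\Leftrightarrow(2)$.

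For the remaining equivalence, $(2)\Rightarrow(3)$ is immediate: if $\Pic(U)=0$ and $Z\subset U$ is a smooth irreducible divisor, then $\operatorname{Cl}(U)=\Pic(U)=0$ forces $Z$ to be principal, so $U\smallsetminus Z$ is again affine with $\Pic(U\smallsetminus Z)=\Pic(U)/\Z[Z]=0$, and by (2) both $H^3(U,1)$ and $H^3(U\smallsetminus Z,1)$ vanish. So the hard direction is $(3)\Rightarrow(2)$, and here I expect the main obstacle. By purity the localization sequence reads
$$H^1_\et(Z,\Q_l/\Z_l)\xrightarrow{\ \gamma_Z\ }H^3(U,1)\to H^3(U\smallsetminus Z,1)\to H^2_\et(Z,\Q_l/\Z_l)\to 0,$$
so (3) is exactly the vanishing of all Gysin maps $\gamma_Z$ with $Z\subset U$ smooth irreducible. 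One would like to conclude $H^3(U,1)=0$ via the Gersten/coniveau spectral sequence: $H^3(U,1)$ is an extension of its image under restriction to the generic point, a subgroup of $H^3_\et(k(X),\Q_l/\Z_l(1))$, by the subgroup $F^1H^3(U,1)$ of classes supported on a curve, and $F^1H^3(U,1)$ is a subquotient of $\bigoplus_C H^1_\et(k(C),\Q_l/\Z_l)$.

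The residue part $F^1H^3(U,1)$ should be killed by the vanishing of the $\gamma_Z$ once one reduces reducible or singular boundary curves to smooth irreducible ones --- plausibly by a moving argument over the finite field $k$ in the style of the Poonen--Gabber Bertini theorems --- but the generic part is the genuine difficulty: $H^3(U,1)$ and $H^3_\et(k(X),\Q_l/\Z_l(1))$ are both divisible $l$-groups, so no corank count can conclude, and chasing (3) down the tower of affine opens obtained by successively deleting smooth irreducible divisors lands one not at $\Spec k(X)$ but at a one-dimensional localization of it (a singular curve in $U$ can never be cut out by finitely many smooth irreducible divisors). Thus the crux of $(3)\Rightarrow(2)$ is to show directly that (3) forces the restriction $H^3(U,1)\to H^3_\et(k(X),1)$ to be zero --- which, combined with the vanishing of the $\gamma_Z$ and the divisibility established above, would yield $H^3(U,1)=0$ and hence the Tate conjecture.
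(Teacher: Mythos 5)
Your treatment of $(1)\iff(2)$ is essentially sound and takes a genuinely different route from the paper: where the paper identifies $H^3(U,1)$ with the coinvariants $\Br_l(U_s)_G$ via Hochschild--Serre and M.~Artin's theorem and then exploits the procyclicity of $G$ to pass between invariants and coinvariants of the Brauer group, you compute the corank of $H^3(U,1)$ directly by arithmetic Poincar\'e duality and a weight argument, arriving at $H^3(U,1)\cong(\Q_l/\Z_l)^{\rho'-\rho}$. This is correct, and in fact sharpens the paper's remark that the corank of $H^3(U,1)$ is independent of $l$. The implication $(2)\Rightarrow(3)$ is as immediate as you say. The genuine gap is $(3)\Rightarrow(2)$, which you set up via the localization sequence and then explicitly leave open.

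The two ingredients you are missing are exactly what the paper uses to close that direction. First, the ``generic part'' you identify as the crux is not an obstacle: for $K=k(X)$ the function field of a surface over a finite field one has $H^3(K,\Q_l/\Z_l(1))=0$ by \cite[prop. 4]{purete}. Hence it suffices to show that (3) forces $H^3(U,1)\to H^3(V,1)$ to be injective for all opens $V\subseteq U$, since the colimit of the $H^3(V,1)$ over shrinking $V$ is $H^3(K,1)=0$; there is no need to control the image in $H^3(K,1)$ separately from the coniveau-$1$ part. Second, your worry that a singular irreducible curve cannot be reached by deleting smooth irreducible divisors is resolved by Poonen's Bertini theorem over finite fields: given an irreducible divisor $Z\subset U$ whose closure has singular locus $F$, choose a smooth projective curve $C_0\subset X$ containing $F$; then $C=C_0\cap U$ is smooth, $Z\setminus C$ is a smooth irreducible divisor in the affine open $U-C$ (which again has trivial Picard group, every divisor on $U$ being principal), and applying (3) to $(U,C)$ and then to $(U-C,Z\setminus C)$ gives injectivity of $H^3(U,1)\to H^3(U-(C\cup Z),1)$, a fortiori of $H^3(U,1)\to H^3(U-Z,1)$. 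Removing the codimension-$1$ components of $U-V$ one at a time in this way, and disposing of the codimension-$2$ remainder by cohomological purity, yields the required injectivity for every $V$ and completes $(3)\Rightarrow(2)$. Without the vanishing of $H^3(K,\Q_l/\Z_l(1))$ your approach cannot conclude, as you yourself observe.
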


\begin{rks} a) We can always reduce to the case where $G$ acts trivially on $\NS(X_s)$ after passing to a finite extension of $k$, since $\NS(X_s)$ is finitely generated. This is sufficient for the Tate conjecture.\\
b) Such $U$'s as in (2) and (3) exist because $\Pic(X)$ is finitely generated.\\
c) Quasi-affine is not sufficient in (2): by purity, $H^3(\A^2-\{0\},1)=H^0(k,-1)$, which is $\ne 0$ in general.
\end{rks}

For the proof, we may assume $X$ geometrically connected. Of course, (2) is a step from (1) to (3). We shall prove successively (1) $\iff$ (2) and (2) $\iff$ (3).

\subsection{The Brauer group of $X$}
The following proposition works for any smooth projective variety $X$ over any finitely generated field $k$.

\begin{prop}\label{p0} The Tate conjecture holds for $X$ in codimension $1$ if and only if $\Br_l(X_s)^G$ is finite.
\end{prop}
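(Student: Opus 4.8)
The plan is to relate the Tate conjecture in codimension $1$ for $X$ to the finiteness of $\Br_l(X_s)^G$ via the Kummer sequence and the Hochschild--Serre spectral sequence, exploiting the standard exact sequence
\[
0\to \NS(X_s)\otimes\Q_l/\Z_l\to H^2(X_s,1)\to \Br_l(X_s)\to 0
\]
obtained by passing to the colimit over the Kummer sequences $0\to \NS(X_s)/l^n\to H^2_\et(X_s,\mu_{l^n})\to \Br(X_s)[l^n]\to 0$ (here one uses that $\Pic^0(X_s)$ is divisible, so $\Pic(X_s)\otimes\Q_l/\Z_l=\NS(X_s)\otimes\Q_l/\Z_l$). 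First I would take $G$-invariants of this sequence. Since $\NS(X_s)$ is finitely generated, $(\NS(X_s)\otimes\Q_l/\Z_l)^G$ differs from $\NS(X_s)^G\otimes\Q_l/\Z_l$ by a finite group, and the latter is the divisible part of a finitely generated group tensored with $\Q_l/\Z_l$, hence a finite corank cofree $\Z_l$-module. Thus $H^2(X_s,1)^G$ has finite corank if and only if $\Br_l(X_s)^G$ is finite.

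Next I would connect $H^2(X_s,1)^G$ to the arithmetic of $X$ via the Hochschild--Serre spectral sequence $H^p(k,H^q(X_s,j))\Rightarrow H^{p+q}(X,j)$, or more efficiently via the comparison between $H^2(X,1)$, the cycle class map, and $T_l$-type arguments. The Tate conjecture in codimension $1$ for $X$ over the finitely generated field $k$ is the statement that the cycle class map $\NS(X)\otimes\Q_l\to H^2_\et(X_s,\Q_l(1))^G$ (with $G$ replaced by an open subgroup, or taking the union over open subgroups — but after a finite extension one may assume all classes are already $G$-invariant) is surjective. Now $H^2_\et(X_s,\Q_l(1))^G$ relates to $T_l\big(H^2(X_s,1)\big)^G = \big(T_l H^2(X_s,1)\big)^G$, and one has the Tate module sequence: the corank of $H^2(X_s,1)^G$ equals $\dim_{\Q_l} \big(V_l H^2(X_s,1)\big)^G$ up to the contribution of a finite group. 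Since $\NS(X_s)\otimes\Q_l$ injects into $\big(V_l H^2(X_s,1)\big)^G$ with the same rank as $\NS(X)\otimes\Q_l$ (using $G$ acting on $\NS(X_s)$ through a finite quotient after base change, plus Tate's theorem that the cycle map is injective), the Tate conjecture becomes equivalent to equality of these dimensions, i.e. to $\big(V_l H^2(X_s,1)\big)^G$ having dimension $\rg\NS(X_s)$ — equivalently to $T_l\big(\Br_l(X_s)^G\big)=0$, i.e. $\Br_l(X_s)^G$ finite (it is always of cofinite type, being a subgroup of the cofinite-type group $\Br_l(X_s)$, so "finite" $\iff$ "$T_l=0$" $\iff$ "divisible part is $0$").

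The one subtlety I would be careful about — and which I expect to be the main technical point rather than a deep obstacle — is the bookkeeping of finite groups and the passage between $G$-invariants of the $l$-divisible group $H^2(X_s,1)$ and $G$-invariants of its Tate/Vere module; concretely, the snake lemma applied to multiplication by $l^n$ on the exact sequence above, together with the fact that $\NS(X_s)$ is finitely generated and $G$ acts on it through a finite quotient, controls all the error terms. A secondary point is to make sure the statement of the Tate conjecture being used ("$\NS(X)_{\Q_l}\to H^2(X_s,\Q_l(1))^{G}$ surjective", possibly after replacing $k$ by a finite extension, or equivalently "$=H^2(X_s,\Q_l(1))^{G'}$ for some open $G'$") matches the invariant-theoretic statement $\Br_l(X_s)^G$ finite; here one uses that finiteness of $\Br_l(X_s)^G$ is insensitive to finite extensions of $k$ in the relevant sense (restriction-corestriction), so the formulation is robust. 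No step requires $\dim X=2$ or $k$ finite, consistent with the stated generality.
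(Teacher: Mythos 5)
Your proposal is correct and follows essentially the same route as the paper: the Kummer sequence relating $\NS(X_s)$, $H^2(X_s,\Q_l(1))$ and the (rational) Tate module of $\Br_l(X_s)$, taking $G$-invariants, and using that $\Br_l(X_s)$ is of cofinite type to translate ``$V_l=0$'' into finiteness. The paper just works directly with the $V_l$-sequence and the vanishing $H^1(G,\NS(X_s)\otimes\Q_l)=H^1(G,\NS(X_s))\otimes\Q_l=0$ (which is the precise form of your ``error terms are controlled'' remark), bypassing your preliminary corank discussion with $\Q_l/\Z_l$-coefficients, whose literal statement (``$H^2(X_s,1)^G$ has finite corank iff $\Br_l(X_s)^G$ is finite'') is anyway superseded by your subsequent Tate-module argument.
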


\begin{proof} The Kummer exact sequences yield a short exact sequence
\[0\to \NS(X_s)\otimes \Q_l\to H^2(X_s,\Q_l(1))\to V_l(\Br_l(X))\to 0\]
where $V_l$ denotes the rational Tate module. Take Galois cohomology and observe that 
\[H^1(G,\NS(X_s)\otimes \Q_l)=H^1(G,\NS(X_s))\otimes \Q_l=0,\] 
so the Tate conjecture holds if and only if $V_l(\Br_l(X))^G=0$. This is equivalent to the finiteness of $\Br_l(X_s)^G$, because $\Br_l(X_s)$ is of cofinite type (as a quotient of $H^2(X_s,1)$).
\end{proof}

\subsection{Cohomology of curves} We go back to the case where $k$ is finite and $X$ is a surface.

\begin{prop}\label{p1} Let $Z$ be a smooth curve over $k$. Then $H^1(Z_s,0)$ is divisible and $H^1(Z_s,0)^G$ is finite, of order bounded independently of $l$.
\end{prop}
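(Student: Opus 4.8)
The plan is to deduce both assertions from two standard facts about the first $l$-adic cohomology of a smooth curve over $k$ (write $q=\#k$): that $H^1(Z_s,\Z_l)$ is a finitely generated torsion-free $\Z_l$-module, and that the geometric Frobenius $\phi$ acts on $H^1(Z_s,\Q_l)$ with every eigenvalue of absolute value $\sqrt q$ or $q$, hence with no eigenvalue equal to $1$. Before that I would reduce to $Z$ geometrically connected: a smooth curve over $k$ is a finite disjoint union of connected smooth curves, each geometrically connected over its field of constants $k'$, a finite extension of $k$; passing to $k'$ and using Shapiro's lemma, $H^1(Z_s,0)^G$ is a finite direct sum of groups of the form $H^1(Z'_{s},0)^{\Gal(k_s/k')}$ with $Z'$ geometrically connected over $k'$, and divisibility is inherited termwise.

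Assume then $Z_s$ connected, let $\bar Z\supset Z$ be the smooth projective completion and $S=\bar Z-Z$ (so $S=\varnothing$ if $Z$ is projective). For divisibility: since $Z$ is smooth, $H^2(Z_s,\Z_l)$ is torsion-free --- it is $\Z_l(-1)$ if $S=\varnothing$, and $0$ otherwise by M.~Artin's bound on the cohomological dimension of an affine curve over an algebraically closed field --- so the universal-coefficient map $H^1(Z_s,\Z_l)\otimes_{\Z_l}\Q_l/\Z_l\to H^1(Z_s,0)$ is an isomorphism; as $H^1(Z_s,\Z_l)$ is finitely generated and torsion-free this group is $(\Q_l/\Z_l)^{\rho}$, hence divisible.

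For the invariants, put $T=H^1(Z_s,\Z_l)$ and recall that $\phi$ topologically generates $G\cong\hat\Z$, so $H^1(Z_s,0)^G=\ker(\phi-1\mid T\otimes_{\Z_l}\Q_l/\Z_l)$. The localization sequence for $(\bar Z_s,S_s)$ with purity yields a $G$-equivariant exact sequence $0\to H^1(\bar Z_s,\Q_l)\to H^1(Z_s,\Q_l)\to K\to0$, where $K=\ker\!\big(H^0(S_s,\Q_l(-1))\to H^2(\bar Z_s,\Q_l)\big)$: the Weil conjectures for $\bar Z$ give absolute value $\sqrt q$ for the eigenvalues on the first term, while $\phi$ acts on $H^0(S_s,\Q_l(-1))$, hence on $K$, as a permutation matrix scaled by $q$, so with eigenvalues of absolute value $q$. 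Therefore $\phi-1$ is invertible on $H^1(Z_s,\Q_l)=T\otimes_{\Z_l}\Q_l$, and applying the snake lemma to $\phi-1$ acting on $0\to T\to T\otimes_{\Z_l}\Q_l\to T\otimes_{\Z_l}\Q_l/\Z_l\to0$ gives $H^1(Z_s,0)^G\iso T/(\phi-1)T$. This group is finite of order $l^{v}$, $v$ the $l$-adic valuation of $\det(\phi-1\mid T)=\pm P(1)$ with $P(t)=\det(t\cdot\Id-\phi\mid H^1(Z_s,\Q_l))$; since $P$ has integer coefficients independent of $l\ne\car k$ (classical for curves: $P$ is read off the zeta function of $\bar Z$ and the residue degrees of the points of $S$) and $P(1)\ne0$, this order is at most $|P(1)|$, a bound independent of $l$.

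The argument is mostly bookkeeping once the two input facts are granted; the genuinely substantive ingredient is the Weil conjectures for curves --- the Riemann hypothesis for $\bar Z$, together with the elementary weight~$2$ estimate at the points of $S$ --- which is precisely what rules out the Frobenius eigenvalue $1$ and makes $\phi-1$ invertible after tensoring with $\Q_l$. The one mildly delicate point in a full write-up is the $l$-independence required for the \emph{uniform} bound: one needs the characteristic polynomial of Frobenius on $H^1(Z_s,\Q_l)$ to lie in $\Z[t]$ independently of $l$, which for curves follows from the classical statement for $\bar Z$ and from the explicit Frobenius action on the boundary term $H^0(S_s,\Q_l(-1))$.
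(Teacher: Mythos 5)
Your proof is correct and follows essentially the same route as the paper, which deduces the statement from Weil's Riemann hypothesis for the smooth completion $\bar Z$ together with the Gysin sequence comparing $H^1(\bar Z_s,0)$ and $H^1(Z_s,0)$; you have simply made explicit the bookkeeping (torsion-freeness, the snake lemma for $\phi-1$, and the $l$-independence of the characteristic polynomial) that the paper leaves implicit. Your closing remark that the uniform bound depends only on the zeta function of $\bar Z$ and the residue degrees at infinity matches the paper's own observation that the bound depends only on the field of constants, the genus of $\bar Z$ and the divisor at infinity.
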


\begin{proof} This easily follows from Weil's Riemann hypothesis applied to the smooth completion $\bar Z$ of $Z$ and the Gysin exact sequence relating $H^1(\bar Z_s,0)$ and $H^1(Z_s,0)$. The proof shows that the bound only depends on the field of constants, the genus of $\bar Z$ and the divisor at infinity.
\end{proof}

\subsection{The Brauer group of $U$}

Let $U\subseteq X$ be an open subset and $Z$ its (reduced) closed complement: we have a short exact sequence
\[0\to \Br_l(X_s)\to \Br_l(U_s) \to \bigoplus_{x\in Z\cap X^{(1)}} H^1((Z'_x)_s,0)\]
where, for all $x\in Z\cap X^{(1)}$, $Z'_x$ is the intersection of the smooth locus of $Z$ with its irreducible  component corresponding to $x$. Proposition \ref{p1} then yields

\begin{prop}
The Tate conjecture holds for $X$ if and only if for any $U$ open in $X$, $\Br_l(U_s)^G$ is finite, and also if and only if there exists such a $U$. \qed
\end{prop}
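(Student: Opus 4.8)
The plan is a short diagram chase on the four-term localization sequence displayed just above, fed by Propositions \ref{p0} and \ref{p1}; I would prove the three conditions equivalent by a cycle of implications, writing (A) for ``the Tate conjecture holds for $X$'', (B) for ``$\Br_l(U_s)^G$ is finite for every open $U\subseteq X$'', and (C) for ``$\Br_l(U_s)^G$ is finite for some open $U\subseteq X$''. The implication (B)$\Rightarrow$(C) is immediate, taking $U=X$ (so $Z=\emptyset$).

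For (C)$\Rightarrow$(A), I would apply the left-exact functor $(-)^G$ to
\[0\to \Br_l(X_s)\to \Br_l(U_s) \to \bigoplus_{x\in Z\cap X^{(1)}} H^1((Z'_x)_s,0),\]
getting an injection $\Br_l(X_s)^G\hookrightarrow \Br_l(U_s)^G$. If the target is finite then so is $\Br_l(X_s)^G$, and since for a surface the whole Tate conjecture is the codimension-$1$ one, Proposition \ref{p0} gives (A).

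For (A)$\Rightarrow$(B), let $Q$ be the image of $\Br_l(U_s)$ in $\bigoplus_{x\in Z\cap X^{(1)}} H^1((Z'_x)_s,0)$, so that $0\to \Br_l(X_s)\to \Br_l(U_s)\to Q\to 0$ is exact; taking $G$-invariants gives the exact sequence $0\to \Br_l(X_s)^G\to \Br_l(U_s)^G\to Q^G$. By Proposition \ref{p0}, (A) makes $\Br_l(X_s)^G$ finite. The set $Z\cap X^{(1)}$ is finite, being the set of generic points of the one-dimensional irreducible components of the closed subscheme $Z\subset X$, so the direct sum is finite; by Proposition \ref{p1} each summand has finite $G$-invariants, hence $\bigl(\bigoplus_x H^1((Z'_x)_s,0)\bigr)^G$ is finite, and a fortiori $Q^G$ is finite. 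An extension of two finite groups being finite, $\Br_l(U_s)^G$ is finite, which is (B).

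I do not expect a genuine obstacle here. The only points needing a little care are that left-exactness of $(-)^G$ suffices (one never needs surjectivity onto a cokernel term), that the index set $Z\cap X^{(1)}$ is finite so that $(-)^G$ commutes with the direct sum, and that Proposition \ref{p0} applies because for a surface there is no Tate conjecture beyond codimension $1$.
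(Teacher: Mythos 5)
Your proof is correct and is exactly the argument the paper leaves implicit: the paper states the localization sequence, invokes Proposition \ref{p1}, and marks the proposition \qed, so the intended content is precisely your cycle of implications using left-exactness of $(-)^G$, finiteness of the index set $Z\cap X^{(1)}$, and Proposition \ref{p0}. No discrepancy to report.
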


\begin{prop} $\Br_l(U_s)^G$ is finite if and only if $\Br_l(U_s)_G$ is finite.
\end{prop}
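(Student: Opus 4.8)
The plan is to reduce the statement to rank--nullity over $\Q_l$, applied to the rational Tate module. Write $A:=\Br_l(U_s)$. By the exact sequence recalled just above (namely $0\to\Br_l(X_s)\to\Br_l(U_s)\to\bigoplus_{x}H^1((Z'_x)_s,0)$), $A$ is an extension of a subgroup of $\bigoplus_{x}H^1((Z'_x)_s,0)$ by $\Br_l(X_s)$; both of these are $\Z_l$-modules of cofinite type (the former by Proposition \ref{p1}, which even shows it is divisible, the latter as noted in the proof of Proposition \ref{p0}), and cofinite type is stable under sub- and quotient modules, so $A$ is of cofinite type. Since $k$ is finite, $G\simeq\hat\Z$ is procyclic; fixing the Frobenius $\sigma$ as a topological generator and using that $A$ is a discrete torsion module, the invariants and coinvariants of $G$ are the kernel and cokernel of $\sigma-1$, so there is a four-term exact sequence
\[0\longrightarrow A^G\longrightarrow A\xrightarrow{\ \sigma-1\ } A\longrightarrow A_G\longrightarrow 0.\]

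Next I would apply the rational Tate module $V_l(M):=\big(\lim_n M[l^n]\big)\otimes_{\Z_l}\Q_l$. Two elementary facts are needed: $V_l$ is an exact functor on the category of cofinite-type $\Z_l$-modules (the relevant $R^1\lim$ vanishes, since the groups $M[l^n]$ are finite), and a cofinite-type module $M$ is finite if and only if $V_l(M)=0$, i.e. $\dim_{\Q_l}V_l(M)=0$. Applying $V_l$ to the four-term sequence yields an exact sequence of finite-dimensional $\Q_l$-vector spaces $0\to V_l(A^G)\to V_l(A)\xrightarrow{v}V_l(A)\to V_l(A_G)\to 0$, where $v$ is induced by $\sigma-1$. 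By rank--nullity $\dim_{\Q_l}\Ker v=\dim_{\Q_l}\Coker v$, that is $\dim_{\Q_l}V_l(A^G)=\dim_{\Q_l}V_l(A_G)$. Hence $V_l(A^G)=0$ if and only if $V_l(A_G)=0$, i.e. $A^G$ is finite if and only if $A_G$ is finite.

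There is no genuine obstacle here: the content is just the remark that an endomorphism of a finite-dimensional vector space has kernel and cokernel of equal dimension. The only points needing (routine) care are the exactness of $V_l$ on cofinite-type modules and the stability of cofinite type under passage to subobjects and quotients, so that every term in the displayed sequence is again of cofinite type. I note that the argument uses nothing about $U$ beyond $A$ being of cofinite type, and nothing about $k$ being finite beyond $G$ being procyclic; in particular the same proof shows $\Br_l(X_s)^G$ is finite iff $\Br_l(X_s)_G$ is.
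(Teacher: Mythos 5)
Your proof is correct and is essentially the paper's own argument: the paper's one-line proof (``for a $G$-module of cofinite type, since $G$ is procyclic, an endomorphism of a finite-dimensional $\Q_l$-vector space is injective iff surjective'') is exactly your reduction to rank--nullity for $\sigma-1$ acting on $V_l(\Br_l(U_s))$. You have only filled in the routine verifications the paper leaves implicit (cofinite type of $\Br_l(U_s)$ via the localization sequence, and exactness of $V_l$ on cofinite-type modules).
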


\begin{proof} This is true for any $G$-module of cofinite type, because $G$ is procyclic: an endomorphism of a finite-dimensional $\Q_l$-vector space is injective if and only if it is surjective.
\end{proof}

\subsection{Passing from $\Br_l(U_s)_G$ to $H^3(U,1)$}

If $\NS(U_s)$ is torsion, then $H^2(U_s,1)\allowbreak\iso \Br_l(U_s)$, hence the  Hochschild-Serre spectral sequence yields a short exact sequence:
\[0\to \Br_l(U_s)_G\to H^3(U,1)\to H^3(U_s,1)^G\to 0.\]

\begin{lemma} If moreover $U$ is affine, the first map of this sequence is an isomorphism of divisible groups.
\end{lemma}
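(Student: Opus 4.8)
The plan is to deduce the statement from two cohomological-dimension inputs: M. Artin's bound on affine varieties, which kills the cokernel $H^3(U_s,1)^G$ of the displayed short exact sequence, and a Hochschild--Serre estimate over the finite field $k$, which forces $H^3(U,1)$ to be $l$-divisible.

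First I would use affineness of $U$. Since $U$ is an open subset of the surface $X$, the base change $U_s=U\otimes_k k_s$ is an affine scheme of finite type and of dimension $\le 2$ over the separably closed field $k_s$. By M. Artin's theorem on the cohomological dimension of affine varieties (SGA 4, XIV), $H^i_\et(U_s,\sF)=0$ for $i>2$ and every constructible $l$-torsion sheaf $\sF$; since étale cohomology of the Noetherian scheme $U_s$ commutes with filtered colimits of sheaves and $\Q_l/\Z_l(1)=\colim_n\Z/l^n(1)$, this yields $H^3(U_s,1)=0$. In particular $H^3(U_s,1)^G=0$, so the first map of the displayed sequence is an isomorphism $\Br_l(U_s)_G\iso H^3(U,1)$.

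Next I would show this group is divisible. Multiplication by $l$ on $\Q_l/\Z_l(1)$ is surjective with kernel $\Z/l(1)$, so it induces an exact sequence
\[H^3(U,1)\by{l}H^3(U,1)\to H^4_\et(U,\Z/l(1)),\]
and it suffices to see that $H^4_\et(U,\Z/l(1))=0$. As $k$ is finite, $\operatorname{cd}(G)\le 1$; feeding Artin's bound for $U_s$ into the Hochschild--Serre spectral sequence $H^p(G,H^q(U_s,\Z/l(1)))\Rightarrow H^{p+q}_\et(U,\Z/l(1))$ shows that $H^i_\et(U,\Z/l(1))=0$ for $i>3$. Hence multiplication by $l$ is onto on $H^3(U,1)$, which, being an $l$-primary torsion group, is therefore divisible; so is the isomorphic group $\Br_l(U_s)_G$, giving the isomorphism of divisible groups.

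I do not expect a serious obstacle here: the argument is a sequence of standard invocations (Artin vanishing, Hochschild--Serre, and $\operatorname{cd}(G)\le 1$ for a finite field). The only point deserving a word of care is that Artin's statement is phrased for constructible sheaves, so one must pass to the colimit $\Q_l/\Z_l(1)=\colim_n\Z/l^n(1)$ on the Noetherian scheme $U_s$ — which is harmless. (As throughout, this uses $l\ne\car k$.)
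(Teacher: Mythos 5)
Your argument is correct and takes essentially the same route as the paper: both applications rest on Artin's theorem that $cd_l(U_s)=2$, first to kill $H^3(U_s,1)^G$ and then to obtain divisibility. The only (harmless) variation is in the second step, where the paper deduces divisibility of $\Br_l(U_s)$ itself from $H^3(U_s,\mu_l)=0$ and passes to the quotient $\Br_l(U_s)_G$, whereas you prove divisibility of $H^3(U,1)$ over $k$ by adding $\operatorname{cd}(G)\le 1$ to get $H^4(U,\Z/l(1))=0$ via Hochschild--Serre.
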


\begin{proof}
This follows from M. Artin's theorem that $cd_l(U_s)=2$ \cite[Cor. 3.2]{sga4}, applied twice: first, the right hand term of the sequence is $0$, and then $\Br_l(U_s)$ is divisible because $H^3(U_s,\mu_l)=0$..
\end{proof}

\subsection{The condition $\Pic(U)=0$}

\begin{lemma} Suppose that $G$ acts trivially on $\NS(X_s)$. Then the map $\Pic(U_s)^G\to \NS(U_s)$ is surjective. We have the equivalences: $\Pic(U)$ is torsion $\iff$ $\Pic(U_s)$ is torsion $\iff$ $\NS(U_s)$ is torsion. In particular, $\Pic(U)=0$ $\Rightarrow$ $\NS(U_s)$ is torsion.
\end{lemma}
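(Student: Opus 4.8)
The plan is to compare the Picard and Néron–Severi groups of $U$ and of $U_s$ via the localization sequences, feeding in two facts special to the finite-field case. First, $\Pic^0(X_s)=\Pic^0_{X/k}(k_s)$ is a \emph{torsion} group (it is the group of $k_s$-points of an abelian variety, the reduced connected Picard scheme, and $k_s$ is an algebraic closure of a finite field), while $\Pic^0(X)=\Pic^0_{X/k}(k)$ is \emph{finite}. Second, Lang's theorem gives $H^1(G,\Pic^0(X_s))=0$; together with $\Br(k)=0$ and Hilbert 90 (so that $\Pic(X)=\Pic(X_s)^G$, using that $X_s$ is connected) this shows that $\Pic(X_s)^G\to\NS(X_s)^G=\NS(X_s)$ is surjective and that $\Pic(X)\to\NS(X_s)$ is surjective with finite kernel, hence induces an isomorphism $\Pic(X)\otimes\Q\iso\NS(X_s)\otimes\Q$.

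Granting this, surjectivity of $\Pic(U_s)^G\to\NS(U_s)$ is formal: the localization sequence yields a surjection $\Pic(X_s)\Surj\Pic(U_s)$, while composing $\Pic(X_s)^G\Surj\NS(X_s)$ with the evident surjection $\NS(X_s)\Surj\NS(U_s)$ gives a surjection $\Pic(X_s)^G\Surj\NS(U_s)$; the latter factors through $\Pic(U_s)^G$, which is therefore surjected onto $\NS(U_s)$.

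For the equivalences, write $\Pic(U_s)=\Pic(X_s)/N$ with $N$ the finitely generated subgroup generated by the classes of the irreducible components of $Z_s$, and let $\bar N\subset\NS(X_s)$ be its image, so $\NS(U_s)=\NS(X_s)/\bar N$. Since $\Pic(X_s)$ is an extension of the finitely generated group $\NS(X_s)$ by the torsion group $\Pic^0(X_s)$, the quotient $\Pic(U_s)$ is an extension of $\NS(U_s)$ by a torsion group, so $\Pic(U_s)$ is torsion $\iff$ $\NS(U_s)$ is torsion. To involve $\Pic(U)$, tensor the localization sequence $\bigoplus_i\Z\to\Pic(X)\to\Pic(U)\to 0$ (the sum over the codimension-$1$ integral components $Z_i$ of $Z$) with $\Q$ and transport it along $\Pic(X)\otimes\Q\iso\NS(X_s)\otimes\Q$. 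As $k$ is perfect, each $(Z_i)_s$ is reduced with prime components forming one $G$-orbit $\{W_{ij}\}_j$, and $[Z_i]$ maps to $\sum_j[W_{ij}]$ in $\NS(X_s)\otimes\Q$. The hypothesis that $G$ acts trivially on $\NS(X_s)$ forces the $[W_{ij}]$ with fixed $i$ to coincide, so the $\Q$-span of $\{\sum_j[W_{ij}]\}_i$ equals the $\Q$-span of $\{[W_{ij}]\}_{i,j}$; quotienting $\NS(X_s)\otimes\Q$ by this common subspace gives $\Pic(U)\otimes\Q$ on the one hand and $\NS(U_s)\otimes\Q$ on the other. Hence $\Pic(U)$ is torsion $\iff$ $\NS(U_s)$ is torsion, which closes the circle; the ``in particular'' is the special case $\Pic(U)=0$.

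All the individual steps are routine; the one place needing care is the bookkeeping of components of $Z$ over $k$ versus over $k_s$, where the triviality of the $G$-action on $\NS(X_s)$ is exactly what forces the two $\Q$-spans to agree (over $k_s$ alone $G$ merely permutes the $W_{ij}$, and the span of the individual $[W_{ij}]$ could be strictly larger than the span of the sums $\sum_j[W_{ij}]=[Z_i]$). The genuine input, and the reason $k$ finite is used here, is Lang's theorem, through the vanishing of $H^1(G,\Pic^0(X_s))$ and the ensuing identification $\Pic(X)\otimes\Q\iso\NS(X_s)\otimes\Q$.
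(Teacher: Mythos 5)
Your proof is correct, but it diverges from the paper's at both of its nontrivial steps. For the surjectivity of $\Pic(U_s)^G\to\NS(U_s)$, the paper argues directly on $U$: it shows $\Pic^0(X_s)\surj\Pic^0(U_s)$ (via Fulton's exact sequences), deduces $H^1(G,\Pic^0(U_s))=0$ from $H^1(G,\Pic^0(X_s))=0$ using $cd(G)=1$, and reads off surjectivity from the long exact sequence for $0\to\Pic^0(U_s)\to\Pic(U_s)\to\NS(U_s)\to0$; you instead push the surjectivity statement down from the projective case ($\Pic(X_s)^G\surj\NS(X_s)$, by the same vanishing of $H^1(G,\Pic^0(X_s))$) through the two localization surjections $\Pic(X_s)\surj\Pic(U_s)$ and $\NS(X_s)\surj\NS(U_s)$ — this is equally valid and avoids the Fulton reference, though it uses essentially the same input. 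The bigger difference is in relating $\Pic(U)$ to $\Pic(U_s)$: the paper disposes of this in one line by a restriction--corestriction (transfer) argument showing that $\Ker$ and $\Coker$ of $\Pic(U)\to\Pic(U_s)^G$ are torsion, whereas you carry out an explicit bookkeeping of the components of $Z$ over $k$ versus $k_s$, using the triviality of the $G$-action on $\NS(X_s)$ a second time to identify $\Pic(U)\otimes\Q$ with $\NS(U_s)\otimes\Q$. Your route is longer but yields a slightly more precise conclusion (an isomorphism of $\Q$-vector spaces, not just the equivalence of torsionness), and it correctly isolates the one delicate point — that $G$ permutes the components $W_{ij}$ of $(Z_i)_s$ transitively, so trivial action on $\NS(X_s)$ forces their classes to coincide and the two $\Q$-spans to agree; the paper's transfer argument bypasses this issue entirely and is the more economical choice. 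One cosmetic remark: you invoke Lang's theorem for $H^1(G,\Pic^0(X_s))=0$, while the paper gets the same vanishing from the weight argument in Proposition 1.4; both are standard for $k$ finite.
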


\begin{proof} If $G$ acts trivially on $\NS(X_s)$, it acts trivially on its quotient $\NS(U_s)$; moreover, $\Pic^0(X_s)\to \Pic^0(U_s)$ is surjective (compare the exact sequences of \cite[Prop. 1.8 and Ex. 10.3.4]{fulton}), hence $\Pic^0(U_s)$ is torsion since $k$ is finite. It follows that the map
\[\Pic(U_s)^G\to \NS(U_s)^G=\NS(U_s)\]
has torsion kernel; it is surjective because the next term $H^1(G,\Pic^0(U_s))$ is $0$. Indeed, since $cd_l(G)=1$, this group is a quotient of $H^1(G,\Pic^0(X_s))$ which is $0$ for the same reason as in the proof of Proposition \ref{p1}.
 
Finally, the groups $\Ker(\Pic(U)\to \Pic(U_s)^G)$ and $\Coker(\Pic(U)\to \Pic(U_s)^G)$ are torsion by a transfer argument. The conclusion follows easily.
\end{proof}

\subsection{Proof of $(1)\iff (2)$}

This follows from what has been proven so far. Namely: the Tate conjecture holds for $X$ $\iff$ $\Br_l(X_s)^G$ finite $\iff$ $\Br_l(U_s)_G$ finite $\iff$ $\Br_l(U_s)_G=0$ (because it is divisible) $\iff$ $H^3(U,1)=0$.

\subsection{From $(2)$ to $(3)$}

We still assume that $G$ acts trivially on $\NS(X_s)$ (a blanket assumption now).

\begin{prop}\label{A} Condition (2) is equivalent to the following: for any affine open $U\subset X$ such that $\Pic(U)=0$ and any open $V\subseteq U$, the map $H^3(U,1)\to H^3(V,1)$ is injective.
\end{prop}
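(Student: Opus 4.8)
The easy half is $(2)\Rightarrow$ the stated condition: a homomorphism out of the zero group is injective. So assume the injectivity condition and fix an affine open $U\subseteq X$ with $\Pic(U)=0$; the goal is $H^3(U,1)=0$. I would start with two harmless reductions. Every open $V\subseteq U$ contains an affine open $V'$, and $H^3(U,1)\to H^3(V,1)$ factors through $H^3(V',1)$, so it suffices to test injectivity against affine $V$; and any affine open $W\subseteq U$ automatically has $\Pic(W)=0$, being a quotient of $\Pic(U)=0$ by the localization sequence. By the lemmas of the previous subsections, for every such $W$ the group $H^3(W,1)\cong\Br_l(W_s)_G$ is divisible and of cofinite type, hence $H^3(W,1)\cong(\Q_l/\Z_l)^{r_W}$ for a finite integer $r_W$.

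Next I would bound the $r_W$ uniformly and pass to a limit. Unconditionally, $H^3(X,1)\to H^3(W,1)$ is surjective: its cokernel embeds in $H^4_{X\setminus W}(X,1)$, which is finite (stratify the proper curve $X\setminus W$; each graded piece is finite by purity together with Proposition \ref{p1}, Weil's Riemann hypothesis, and the finiteness of $(\Q_l/\Z_l(-1))^G$), and a divisible group has no proper subgroup of finite index. As $H^3(X,1)$ is of cofinite type, say of corank $c$, this gives $r_W\le c$ for all $W$. Now the injectivity hypothesis says precisely that $W'\subseteq W$ forces $H^3(W,1)\hookrightarrow H^3(W',1)$, i.e. $r_W\le r_{W'}$. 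The affine opens of $U$ with trivial Picard group form a filtered system under reverse inclusion, on which $W\mapsto r_W$ is monotone and bounded by $c$, hence eventually constant, equal to some $r_\infty\le c$, on all $W$ contained in a fixed $W_0$. For $W'\subseteq W\subseteq W_0$ the map $(\Q_l/\Z_l)^{r_\infty}=H^3(W,1)\hookrightarrow H^3(W',1)=(\Q_l/\Z_l)^{r_\infty}$ is an injection of divisible groups of equal finite corank, hence an isomorphism; taking the colimit over the affine opens of $W_0$, which are cofinal among all its opens, yields
\[H^3(k(X),\Q_l/\Z_l(1))=\varinjlim_{W'\subseteq W_0}H^3(W',1)\cong(\Q_l/\Z_l)^{r_\infty}.\]

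It remains to prove $r_\infty=0$, i.e. that $H^3(k(X),\Q_l/\Z_l(1))=0$; since this group is now known to be $(\Q_l/\Z_l)^{r_\infty}$, it is enough to show it is finite. Via $H^3(k(X),1)=\Br_l(k(X_s))_G$ (Hochschild–Serre, using $cd_l(k(X_s))=2$ and that $G$ is procyclic) this amounts to: the divisible part of the $G$-coinvariants of $\Br_l(k(X_s))$ vanishes. Here I would invoke Tsen's theorem — it kills $\Br$ of the function fields of all curves $C\subseteq X_s$, so that a class in $\Br_l(k(X_s))$ is pinned down by its residues in the $H^1(C_s,0)$ — together with Proposition \ref{p1} (finite $G$-coinvariants of each such $H^1$), Artin's $cd_l(k(X_s))=2$, and once more the procyclicity of $G$; the reciprocity relations among the residues should then force the divisible part to be finite, hence zero. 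Given $H^3(k(X),1)=0$, the injections of the previous step collapse, so $H^3(W,1)=0$ for every affine $W$ with $\Pic(W)=0$, in particular for $U$, which is $(2)$. I expect this final step — the vanishing of the divisible part of $\Br_l(k(X_s))_G$ — to be the delicate point; everything preceding it merely converts the injectivity condition into a statement about the generic fibre, and it is striking that exactly the two ingredients singled out in the introduction (Artin's cohomological-dimension bound and the procyclicity of $G$) are what this reduction consumes.
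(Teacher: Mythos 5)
Your overall route---convert the injectivity hypothesis into an injection of $H^3(U,1)$ into the cohomology of the generic point and then kill the latter---is exactly the paper's, but your write-up has a genuine gap precisely where you flag one: the finiteness (hence vanishing) of $H^3(k(X),\Q_l/\Z_l(1))$ is never proved. The sentence ``the reciprocity relations among the residues should then force the divisible part to be finite'' is not an argument, and this step is the \emph{entire} content of the nontrivial direction: everything before it is formal. The paper disposes of it by citing \cite[prop. 4]{purete}, i.e.\ the known theorem that $H^3(K,\Q_l/\Z_l(1))=0$ for $K$ the function field of a smooth surface over a finite field; this is a genuinely arithmetic input (Tsen's theorem only controls the Brauer groups of the curves inside $X_s$, and organizing the residues into a statement about $\Br_l(k(X_s))_G$ is where the work lies). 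Without either proving that vanishing or citing it, your proof does not close.

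Separately, most of your bookkeeping is superfluous. You do not need to restrict to affine $V$, to prove surjectivity of $H^3(X,1)\to H^3(W,1)$, to bound coranks, or to argue that the corank is eventually constant along the filtered system: if $H^3(U,1)\to H^3(V,1)$ is injective for \emph{every} open $V\subseteq U$, then $H^3(U,1)$ injects into $\varinjlim_V H^3(V,1)$ directly (a class in the kernel would already die at some finite stage), and continuity of \'etale cohomology identifies this colimit with $H^3(K,\Q_l/\Z_l(1))$ for $K=k(X)=k(U)$. All the structural facts you establish along the way (divisibility, cofinite type, monotonicity of $r_W$) are never used once the vanishing at the generic point is available, and they do not help you obtain it.
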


\begin{proof}
If the statement is true, then $H^3(U,1)$ injects into $H^3(K,1)$ where $K=k(X)=k(U)$; but this group is $0$ by \cite[prop. 4]{purete}.
\end{proof}

\begin{prop}\label{B} The condition of Proposition \ref{A} is equivalent to the same statement, but with $Z:=(U-V)_\rd$ irreducible of dimension $1$.
\end{prop}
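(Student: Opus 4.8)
One implication is trivial, the condition of Proposition \ref{A} being the stronger one. For the converse we are given that $H^3(U,1)\to H^3(V,1)$ is injective whenever $U\subset X$ is affine open with $\Pic(U)=0$ and $(U-V)_\rd$ is irreducible of dimension $1$, and we must deduce the same for an arbitrary open $V\subseteq U$ (still with $U$ affine and $\Pic(U)=0$). The plan is to peel off the irreducible components of $Z:=(U-V)_\rd$ one at a time, treating the $0$-dimensional ones separately.

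Write $Z=Z_{\rm div}\cup T$, where $Z_{\rm div}=Z_1\cup\dots\cup Z_n$ is the union of the $1$-dimensional irreducible components of $Z$ and $T:=Z-Z_{\rm div}$ is a finite set of closed points. Put $V':=U-Z_{\rm div}$, so that $V\subseteq V'\subseteq U$ and $V'-V=T$; we factor $H^3(U,1)\to H^3(V,1)$ through $H^3(V',1)$ and prove injectivity of each factor. For $H^3(V',1)\to H^3(V,1)$ we would remove the points of $T$ one at a time: for a smooth surface $W$ over $k$ (such as $V'$ minus finitely many points, which is smooth because it is open in $X$) and a closed point $x\in W$, absolute purity gives $H^3_x(W,1)\simeq H^{-1}(x,-1)=0$, whence $H^3(W,1)\to H^3(W-x,1)$ is injective.

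For $H^3(U,1)\to H^3(V',1)$, set $U_0:=U$ and $U_j:=U-(Z_1\cup\dots\cup Z_j)$ for $1\le j\le n$, so $U_n=V'$, and apply the restricted condition to each pair $(U_{j-1},U_j)$. This requires checking three things. First, $\Pic(U_j)=0$: this follows from the localization exact sequence $\bigoplus_i\Z\to\Pic(U)\to\Pic(U_j)\to 0$ for the smooth variety $U$. Second, $U_j$ is affine: here $\Pic(U)=0$ is used in earnest, as it lets us write each prime divisor $Z_i$ on the normal affine $U$ as $\operatorname{div}(f_i)$ with $f_i\in\mathcal O(U)$, and then $U_j=\Spec\mathcal O(U)[1/(f_1\cdots f_j)]$. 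Third, $(U_{j-1}-U_j)_\rd=Z_j\cap U_{j-1}$ is, when non-empty, a dense open subscheme of the irreducible curve $Z_j$ — its complement $Z_j\cap(Z_1\cup\dots\cup Z_{j-1})$ being a proper closed subset, since the $Z_i$ are distinct components of $Z$ — hence it is reduced, irreducible of dimension $1$ (and if it is empty, $U_j=U_{j-1}$). Composing the injections $H^3(U_{j-1},1)\to H^3(U_j,1)$ over $j=1,\dots,n$, and then with $H^3(V',1)\to H^3(V,1)$, yields the result.

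I do not expect a serious obstacle: this is essentially a dévissage. The only genuinely substantive point is the one emphasized above — that the intermediate opens $U_j$ remain affine with vanishing Picard group, which is precisely why the hypothesis must be $\Pic(U)=0$ rather than merely that $\Pic(U)$ be torsion — together with the bookkeeping that each successive complement $Z_j\cap U_{j-1}$ is a genuine irreducible curve and not something degenerate after intersection with $U_{j-1}$.
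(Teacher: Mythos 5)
Your proposal is correct and follows essentially the same route as the paper: peel off the $1$-dimensional components one at a time (each intermediate open stays affine because the divisor being removed is principal, thanks to $\Pic(U)=0$, and its Picard group stays zero), then dispose of the remaining codimension-$2$ locus by cohomological purity. The extra checks you spell out — that each successive complement is a dense open of an irreducible curve, and the explicit localization $U_j=\Spec\mathcal O(U)[1/(f_1\cdots f_j)]$ — are details the paper leaves implicit.
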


\begin{proof} Let $D_1,\dots, D_n$ be the irreductible components of codimension $1$ of $Z$. For $0\le i\le n$,  define $U_i$ inductively as $U_{i-1}\setminus D_i$, with $U_0=U$. We have a chain of open subsets
\[U\supset U_1\supset\dots U_n\supseteq V\]
where each $U_i$ is affine since $D_i$ is principal, $\Pic(U_i)=0$, and $U_n-V$ is of codimension $\ge 2$ in $U_n$. By the hypothesis, $H^3(U_{i},1)\inj H^3(U_{i+1},1)$ for all $i$,  and also  $H^3(U_{n},1)\inj H^3(V,1)$ by cohomological purity.
\end{proof}

\begin{prop}\label{C} The condition of Proposition \ref{B} is equivalent to the same statement, but with $Z$ smooth.
\end{prop}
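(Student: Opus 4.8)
The plan is as follows. One direction is immediate: the injectivity statement for an arbitrary irreducible $Z$ of dimension $1$ contains, as a special case, the one for smooth $Z$. For the converse I would assume the statement for smooth curves and deduce it for an arbitrary irreducible reduced curve $Z\subset U$, with $U$ affine and $\Pic(U)=0$. If $Z$ is smooth there is nothing to prove, so the case to handle is a singular $Z$, whose singular locus $S$ is then a nonempty finite set of closed points.

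The key idea is to absorb the singularities of $Z$ into an auxiliary smooth curve. I would first choose a curve $C\subset U$, smooth over $k$ and containing $S$; the existence of such a $C$ rests on Poonen's Bertini theorem over finite fields, in the refined form prescribing the intersection with a fixed finite closed subscheme. We may take $C$ irreducible --- otherwise its connected components are disjoint smooth curves, to be removed one at a time as in the proof of Proposition~\ref{B}. Since $C$ is smooth and $Z$ is not, $C\ne Z$, so $Z\cap C$ is finite and $Z':=Z\setminus C$ is a dense open of $Z$, hence again an irreducible reduced curve; moreover $Z'\subseteq Z\setminus S$, so $Z'$ is smooth. Because $\Pic(U)=0$ the effective Cartier divisor $C$ is principal, so $U':=U-C$ is affine, and $\Pic(U')=0$ as a quotient of $\Pic(U)$; finally $Z'$ is a smooth irreducible curve closed in $U'$, and $U'\setminus Z'=U\setminus(C\cup Z)$.

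Now the hypothesis applies twice: to the pair $(U,C)$, giving that $H^3(U,1)\to H^3(U',1)$ is injective, and to the pair $(U',Z')$, giving that $H^3(U',1)\to H^3(U'\setminus Z',1)=H^3(U\setminus(C\cup Z),1)$ is injective. Composing, the restriction $H^3(U,1)\to H^3(U\setminus(C\cup Z),1)$ is injective. But $U\setminus(C\cup Z)\subset U-Z$, so this restriction factors through $H^3(U-Z,1)$; an injective map factoring in this way forces $H^3(U,1)\to H^3(U-Z,1)$ to be injective, which is exactly what we want.

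The only substantial point --- and the one I expect to require care --- is the very first step, the existence of the auxiliary curve $C$. Over a finite field the classical Bertini theorem is not available, so one must invoke Poonen's theorem together with its variant controlling the restriction to the finite subscheme $S$, and check that the resulting $C$ can be taken smooth over $k$, passing through $S$, and distinct from $Z$. Everything after that is a formal manipulation of restriction maps in \'etale cohomology combined with the smooth case of the statement.
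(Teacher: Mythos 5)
Your proof is correct and follows essentially the same route as the paper: use Poonen's Bertini theorem over finite fields to find a smooth curve $C$ through the singular locus of $Z$, apply the smooth case to $(U,C)$ and then to $(U-C,\,Z\setminus C)$, and conclude by composing restriction maps. Your extra remarks (irreducibility of $C$, affineness and triviality of $\Pic$ for $U-C$) are details the paper leaves implicit, handled exactly as in Proposition~\ref{B}.
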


\begin{proof} Let 
$\bar Z$ be the closure of $Z$ in $X$ and $F$ be its singular locus. By Poonen \cite[Cor. 3.4]{poonen}, there exists a smooth projective curve $C_0\subset X$  containing $F$; a fortiori, $C=C_0\cap U$ is smooth. Apply the hypothesis to $(U,C)$ and then to $(U-C, Z\setminus C)$ (note that $Z\setminus C$ is smooth): we get that the composition
\[H^3(U,1)\to H^3(U-C,1)\to H^3(U-(C\cup Z),1)\] 
is injective. A fortiori, $H^3(U,1)\to H^3(U-Z,1)$ is injective.
\end{proof}

This concludes the proof of (2) $\iff$ (3), hence of Theorem \ref{t1}.


\section{Going further}

Let $(U,Z)$ be as in Condition  (3) of Theorem \ref{t1}, and let $V=U-Z$.  We have the Gysin exact sequence
\begin{equation}\label{eq2}H^2(V,1)\by{\partial} H^1(Z,0)\by{i_*} H^3(U,1)\by{j^*} H^3(V,1) 
\end{equation}
which yields two reformulations of this condition: the vanishing of $i_*$, and the surjectivity of $\partial$. The following proposition cuts down the size of the image of $i_*$.

\begin{prop}\label{p3} In \eqref{eq2},\\ 
a) the image of $\partial$  contains the image of  $i^*:H^1(U,0)\to H^1(Z,0)$.\\
b)  $i_*$ factors through the finite group $H^1(Z_s,0)^G$.\\
c) $i_*=0$ (hence $j^*$ is injective)  for $l\ge l_0$, where $l_0$ is a  prime number depending on $Z$.
\end{prop}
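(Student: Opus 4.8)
The plan is to analyze the Gysin sequence \eqref{eq2} together with its "geometric" counterpart over $k_s$, using the Hochschild–Serre spectral sequence to compare the two. For part (a), I would note that the Gysin sequence is compatible with the pullback maps from the corresponding sequence for the inclusion $Z \hookrightarrow U$ of closed subsets; concretely, the square relating $H^1(U,0) \to H^1(Z,0)$ (restriction) and $\partial \colon H^2(V,1) \to H^1(Z,0)$ commutes with the boundary in the long exact sequence of the pair $(U, V)$, and since the composite $H^1(U,0) \to H^1(V,0) \xrightarrow{\partial} \cdots$ vanishes, the image of $i^*$ lands in $\ker(H^1(Z,0) \to H^3(U,1)) = \IM(\partial)$. (Equivalently: a class in $H^1(U,0)$ restricted to $Z$ is already a boundary because it extends over $U \supset V$.)

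For part (b), I would compare the Gysin sequence for $(U_s, Z_s, V_s)$ with that for $(U,Z,V)$ via Hochschild–Serre. Since $cd_l(G) = 1$, for any variety $W$ over $k$ there is a short exact sequence $0 \to H^{n-1}(W_s, j)_G \to H^n(W,j) \to H^n(W_s,j)^G \to 0$. Applying this to $Z$ (with $n=1$, $j=0$) and to $U$ (with $n=3$, $j=1$), the map $i_* \colon H^1(Z,0) \to H^3(U,1)$ sits in a morphism of these sequences induced by the geometric Gysin map $(i_s)_* \colon H^1(Z_s,0) \to H^3(U_s,1)$ on the "$(-)^G$" part and by $(i_s)_* \colon H^0(Z_s,-1)_G \to H^2(U_s,1)_G$ on the "$(-)_G$" part. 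But $H^0(Z_s,-1)_G = H^0(Z_s,-1) = (\Q_l/\Z_l(-1))^{\oplus(\#\text{components})}$ up to the Galois coinvariants, which is finite — more to the point, the composite $H^1(Z,0) \to H^3(U,1) \to H^3(U_s,1)^G$ factors through $(i_s)_*$ on $H^1(Z_s,0)^G$, and the remaining contribution from $H^1(Z,0) \to H^1(Z_s,0)$ has image landing in $H^1(Z_s,0)^G$; since by Proposition \ref{p1} the group $H^1(Z_s,0)^G$ is finite, $\IM(i_*)$ is a quotient of a subgroup of a finite group, hence finite, and in fact $i_*$ itself factors through $H^1(Z,0) \twoheadrightarrow H^1(Z_s,0)^G$ because $H^1(Z_s,0)$ is divisible (Proposition \ref{p1}) while $H^3(U,1)$ is divisible and the kernel of $H^1(Z,0)\to H^1(Z_s,0)^G$ is the full divisible part coming from $H^0(Z_s,-1)_G$.

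For part (c), I would make the bound in Proposition \ref{p1} explicit: the order of $H^1(Z_s,0)^G$ divides a quantity depending only on the field of constants of $\bar Z$, the genus of $\bar Z$, and the divisor at infinity — all of which are fixed once $Z$ is fixed. Hence there is a prime $l_0$ (larger than every prime dividing that order, for every irreducible component of $Z$) such that for $l \ge l_0$ the finite group $H^1(Z_s,0)^G$ has no $l$-torsion, so it vanishes; by part (b), $i_* = 0$, and then exactness of \eqref{eq2} gives that $j^*$ is injective.

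The main obstacle I anticipate is part (b): making precise that $i_*$ genuinely kills the divisible "transcendental" part of $H^1(Z,0)$ and factors through the finite $H^1(Z_s,0)^G$, rather than merely having finite image. This requires carefully tracking the two edge maps of Hochschild–Serre through the Gysin morphism and checking that the map $H^1(Z,0) \to H^1(Z_s,0)^G$ induced on the source is compatible with $i_*$ — i.e. that the contribution of $\Br_l(Z_s)$-type terms (here $H^0(Z_s,-1)_G$) is annihilated on the way into $H^3(U,1)$. The compatibility of Gysin maps with Hochschild–Serre is standard, but identifying exactly which subquotient survives is where the dimension hypothesis $\dim X = 2$ and Artin's vanishing $cd_l(U_s) = 2$ must be invoked, as in the earlier lemmas.
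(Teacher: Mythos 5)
Part (c) of your proposal is essentially the paper's argument and is fine once (b) is in place. But parts (a) and (b) both have genuine gaps, and they share a common cause: you never use the hypothesis $\Pic(U)=0$, without which the statement is false. For (a), the composite you invoke, $H^1(U,0)\to H^1(V,0)\xrightarrow{\partial}\cdots$, does not typecheck: the boundary $\partial$ of \eqref{eq2} is defined on $H^2(V,1)$, one degree and one twist higher. Your parenthetical ``a class restricted to $Z$ is already a boundary because it extends over $U$'' is exactly what has to be proved, and it fails in general: by the projection formula $i_*i^*\alpha=\alpha\cup\cl(Z)$, so $\IM i^*\subseteq\Ker i_*=\IM\partial$ holds precisely because $\cl(Z)=0$ in $H^2(U,\Z_l(1))$, which in turn holds because $Z$ is principal in $U$. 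The paper makes this concrete: choosing a global equation $f$ of $Z$ on $U$ (this is where $\Pic(U)=0$ enters), the composite $H^1(U,0)\xrightarrow{j^*}H^1(V,0)\xrightarrow{\cup(f)}H^2(V,1)\xrightarrow{\partial}H^1(Z,0)$ equals $i^*$ by the definition of the purity isomorphism. Either route works, but some such input is indispensable.

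For (b), the Hochschild--Serre strategy is the right one (and is the paper's), but your identification of the subobject of $H^1(Z,0)$ is wrong: it is $H^1(G,H^0(Z_s,\Q_l/\Z_l))\cong H^0(Z_s,0)_G\cong\Q_l/\Z_l$ --- untwisted, divisible and infinite, not finite. Consequently ``$\IM(i_*)$ is a quotient of a subgroup of a finite group'' is unfounded, and the closing claim that $i_*$ kills this subgroup because source and target are both divisible is a non-sequitur (the identity map of $\Q_l/\Z_l$ is a map of divisible groups that kills nothing). What must actually be shown is that $i_*$ vanishes on the subgroup $H^0(Z_s,0)_G\subseteq H^1(Z,0)$. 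The paper deduces this from part (a): in the commutative diagram \eqref{eq7.1} the left vertical map $H^0(U_s,0)_G\to H^0(Z_s,0)_G$ is multiplication by $[k_Z:k]$ on $\Q_l/\Z_l$, hence surjective, so $H^0(Z_s,0)_G\subseteq\IM i^*\subseteq\IM\partial=\Ker i_*$, and therefore $i_*$ factors through $H^1(Z_s,0)^G$, which is finite by Proposition \ref{p1}. (Alternatively one could argue that $(i_s)_*$ on $H^0$ is cup product with $\cl(Z_s)$, which dies in $H^2(U_s,\Q_l/\Z_l(1))$ because $\NS(U_s)$ is torsion --- but that again rests on $\Pic(U)=0$.) Your (c) then goes through verbatim once (b) is repaired.
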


\begin{proof} a) Let $f\in \Gamma(U,\G_a)$ be an equation of $Z$ in $U$. Then $f$ is inversible on $V$. Let  $(f)\in H^1(V,\Z_l(1))$ be its Kummer class: the composition
\[H^1(U,0)\by{j^*}H^1(V,0)\by{\cup (f)}H^2(V,1)\by{\partial} H^1(Z,0)\]
equals $i^*$: this follows from the definition of the purity isomorphism  \cite[2.1]{cycle}.

b) Let $k_Z$ be the field of constants of $Z$. We have a commutative diagram of exact sequences
\begin{equation}\label{eq7.1}\begin{CD}
0@>>> H^0(Z_s,0)_G@>>> H^1(Z,0)@>>> H^1(Z_s,0)^G@>>> 0\\
&&@Ai^* AA @Ai^* AA @Ai^* AA\\
0@>>> H^0(U_s,0)_G@>>> H^1(U,0)@>>> H^1(U_s,0)^G@>>> 0
\end{CD}
\end{equation}
where the left vertical arrow is multiplication by $[k_Z:k]$ in $\Q_l/\Z_l$, hence surjective. By a), $\IM\partial\supseteq \IM H^0(Z_s,0)_G$.

c) follows from Proposition \ref{p1}.
\end{proof}

\section{Three approaches}

\subsection{The first idea fails} 

It would be to use the fact that the Tate conjecture is independent of $l$ \cite[Prop. 4.3]{tate}. Unfortunately, $l_0$ is not  (a priori) bounded independently of $Z$ in Proposition \ref{p3} c).

\begin{rk} On the other hand, the corank of $H^3(U,1)$ is independent of $l$, as is more generally the characteristic polynomial of Frobenius acting on $H^2(U_s,\Q_l)$. By an Euler-Poincaré characteristic argument, it suffices to see the same thing for $H^0$ and $H^1$. This is trivial for $H^0$; for $H^1$, by semi-purity we have an exact sequence
\[0\to H^1(X_s,\Q_l) \to H^1(U_s,\Q_l)\to \bigoplus_{x\in X_s^{(1)}\setminus U_s} \Q_l(-1)\by{\cl_l} H^2(X_s,\Q_l)\] 
so it suffices to see the independence of $l$ for $\Ker(\cl_l)$. But $\cl_l$ factors through $\cl\otimes \Q_l(-1)$, where $\cl$ is the divisor map  $\bigoplus\limits_{x\in X_s^{(1)}\setminus U_s} \Z\to \NS(X_s)$.

If one replaces $H^2(U_s,\Q_l)$ by $H^2_c(U_s,\Q_l)$, this computation shows by Poincaré duality that the corresponding inverse characteristic polynomial has integer coefficients.
\end{rk}

\subsection{Second idea: from above} 

It is inspired by Gillet's proof of Gersten's conjecture for dvr's for $K$-theory with finite coefficients \cite{gillet}, abstracted in \cite[app. B]{cthk}. I divide it in three parts: the first two work, but not the last.

\subsubsection{First part}  It is motivated by Gabber's rigidity theorem \cite{gabber}:

\begin{thm}\label{t3}  $(A_h,I)$ Henselian pair, $U_h = \Spec(A_h)$, $Z = \Spec(A_h/I)$, $i : Z \inj U_h$ the closed immersion. Then for any torsion abelian étale sheaf $F$ on  $U_h$ and for all $q>0$,  $H^q(U_h,F)\by{i^*} H^q(Z,F)$ is bijective.
\end{thm}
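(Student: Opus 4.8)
This is Gabber's rigidity theorem, the affine analogue of the proper base change theorem \cite{gabber}; we shall use it as a black box. Let me nevertheless sketch the shape of the argument. The plan is a d\'evissage on $F$ reducing everything to the case of constant coefficients on an arbitrary Henselian pair, after which one meets the genuinely hard part.

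For the d\'evissage: since $U_h$ and $Z$ are quasi-compact and quasi-separated, \'etale cohomology commutes with filtered colimits of abelian sheaves, and every torsion sheaf is the filtered colimit of its constructible subsheaves; so one reduces to $F$ constructible and killed by some integer $n$ (and, by a standard limit argument, to $A_h$ noetherian). Next, every such $F$ embeds into a finite direct sum of sheaves $g_*\underline{\Z/n}$ with $g\colon Y\to U_h$ finite; feeding the resulting short exact sequences into the long exact cohomology sequences, with an induction on $q$, reduces one to $F=g_*\underline{\Z/n}$, $g$ finite. For such an $F$ the point is that $\bigl(\Gamma(Y,\mathcal O_Y),\,I\,\Gamma(Y,\mathcal O_Y)\bigr)$ is again a Henselian pair, with closed member $Y\times_{U_h}Z$; finite base change then identifies $i^*$ with the restriction map $H^q(Y,\underline{\Z/n})\to H^q(Y\times_{U_h}Z,\underline{\Z/n})$. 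So it suffices to treat $F=\underline{\Z/n}$ on an arbitrary Henselian pair, and then, by d\'evissage on $n$, $F=\underline{\Z/\ell}$ for $\ell$ prime.

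What is left is the bijectivity of $H^q(\Spec A,\underline{\Z/\ell})\to H^q(\Spec A/I,\underline{\Z/\ell})$, and I expect this to be the main obstacle. In degrees $q\le1$ it is essentially the definition of a Henselian pair: base change gives an equivalence between finite \'etale $A$-algebras and finite \'etale $A/I$-algebras (Elkik), hence $\pi_1^{\et}(\Spec A)\iso\pi_1^{\et}(\Spec A/I)$, and this handles $H^0$ and $H^1$ — indeed the cohomology in degrees $\le1$ of any locally constant sheaf. The passage to $q\ge2$ is not formal; it is Gabber's geometric argument: reduction to Henselizations of finitely generated $\Z$-algebras, a fibration in relative curves, and an induction on relative dimension resting on M.~Artin's vanishing theorems for affine schemes (cf.\ \cite{sga4}). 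For that step I would simply invoke \cite{gabber}.
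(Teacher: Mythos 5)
The paper gives no proof of Theorem \ref{t3}: it is quoted as Gabber's rigidity theorem with a bare reference to \cite{gabber}, which is exactly what you do. Your additional sketch of the d\'evissage (reduction to constructible sheaves by passage to filtered colimits, embedding into finite sums of $g_*\underline{\Z/n}$ with $g$ finite, stability of Henselian pairs under finite extensions plus finite base change, and the equivalence of finite \'etale sites handling degrees $\le 1$) is accurate, and you correctly isolate the non-formal step in degrees $\ge 2$ as the actual content of Gabber's paper.
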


Coming back to our pair $(U,Z)$: recall that a \emph{Nisnevich neighbourhood} of $Z\inj U$ is a Cartesian square
\begin{equation}\label{eq8}
\begin{CD}
V_1@>j_1>> U_1\\
@Vp VV @Vq VV\\
V@>j>> U
\end{CD}
\end{equation}
with $q$ étale and $q^{-1}(Z)\iso Z$. The henselisation $(U_h,Z)$ of $(U,Z)$ is the filtering colimit of such neighbourhoods.

\begin{cor}\label{c1} There exists $(U_1,q)$ such that $H^1(U_1,0)\to H^1(Z,0)$ is surjective.
\end{cor}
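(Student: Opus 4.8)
The plan is to derive the statement from Gabber's rigidity theorem (Theorem~\ref{t3}) together with the continuity of étale cohomology, the one delicate point being that $H^1(Z,0)$ is \emph{not} finitely generated — it contains a divisible ``arithmetic'' subgroup of corank one — so that the surjectivity one gets at the level of the henselisation does not, at first sight, descend to a single neighbourhood.

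First I would pass to the limit. The Nisnevich neighbourhoods of $Z\inj U$, which we may take affine with affine transition morphisms, form a cofiltered system with $U_h=\lim U_1$; continuity of étale cohomology for the torsion sheaf $\Q_l/\Z_l$ pulled back from $U$ then gives $H^1(U_h,0)=\colim_{U_1}H^1(U_1,0)$, while Theorem~\ref{t3} gives an isomorphism $H^1(U_h,0)\iso H^1(Z,0)$. Putting $B_{U_1}:=\IM\bigl(H^1(U_1,0)\to H^1(Z,0)\bigr)$, the $B_{U_1}$ form an increasing filtered family of subgroups of $H^1(Z,0)$ with union $H^1(Z,0)$, so it suffices to exhibit one $U_1$ for which $B_{U_1}=H^1(Z,0)$.

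Next I would isolate the divisible part. Since the structure morphism $Z\to\Spec k$ factors through every closed immersion $Z\inj U_1$, the subgroup $D:=\IM\bigl(H^1(\Spec k,0)\to H^1(Z,0)\bigr)$ lies in $B_{U_1}$ for \emph{every} $U_1$; and $D$ has finite cokernel in $H^1(Z,0)$. Indeed, the Hochschild--Serre spectral sequence for $Z$ over $k$ together with $\operatorname{cd}_l G=1$ gives a short exact sequence $0\to \Q_l/\Z_l\to H^1(Z,0)\to H^1(Z_s,0)^G\to 0$ whose right-hand term is finite by Proposition~\ref{p1}; and the map from $H^1(\Spec k,0)$ onto the left-hand copy of $\Q_l/\Z_l$ is, via Shapiro's lemma, the restriction to the absolute Galois group of the field of constants $k_Z$ of $Z$, i.e.\ multiplication by $[k_Z:k]$ on $\Q_l/\Z_l$, hence surjective. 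Thus $H^1(Z,0)/D\cong H^1(Z_s,0)^G$ is finite.

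Finally, for every $U_1$ the group $H^1(Z,0)/B_{U_1}$ is a quotient of the finite group $H^1(Z,0)/D$; along the filtered system these finite quotients surject onto one another, and their colimit $H^1(Z,0)/\colim_{U_1}B_{U_1}$ is $0$, so their orders stabilise to $1$ and $B_{U_1}=H^1(Z,0)$ for $U_1$ large enough, which is the assertion. The crux — more a subtlety than a genuine obstacle, since it does go through — is the step just made: one cannot simply invoke ``a filtered colimit onto a finitely generated group is eventually attained'', because of the divisible arithmetic part of $H^1(Z,0)$; the point is that this part is pulled back from $\Spec k$ and hence lies in every $B_{U_1}$, leaving only the finite piece $H^1(Z_s,0)^G$ to be absorbed into the colimit.
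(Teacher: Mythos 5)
Your argument is correct and follows essentially the same route as the paper: Gabber's rigidity theorem plus a limit argument reduces the question to the finite group $H^1(Z_s,0)^G$, the divisible arithmetic part of $H^1(Z,0)$ being absorbed via the surjectivity of multiplication by $[k_Z:k]$ on $\Q_l/\Z_l$. The paper organizes this slightly differently --- it first obtains surjectivity of $H^1((U_1)_s,0)^G\to H^1(Z_s,0)^G$ at a finite stage and then descends by the snake lemma applied to the analogue of \eqref{eq7.1}, whereas you quotient by the image of $H^1(\Spec k,0)$ --- but the inputs and the key finiteness reduction are identical.
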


\begin{proof} By Theorem \ref{t3}, the homomorphism $H^1((U_h)_s,0)^G\to H^1(Z_s,0)^G$ is bijective. As the right hand side is finite (Proposition \ref{p1}), there exists $U_1$ such that $H^1((U_1)_s,0)^G\to H^1(Z_s,0)^G$ is surjective. Then $H^1(U_1,0)\to H^1(Z,0)$
is also surjective, by the snake lemma applied to (the analog of) \eqref{eq7.1}: use the surjectivity of the left vertical map as in the proof of Proposition \ref{p3} b).
\end{proof}

By Proposition \ref{p3} a), Corollary \ref{c1} implies that $\partial_1:H^2(V_1,1)\to H^1(Z,0)$ is surjective. Unfortunately, this is not sufficient: how do we go down? There is no push-forward for $p$.

\subsubsection{Second part} 
To correct this problem, consider the normalisation $\bar U_1$ of $U$ in $q$, enriching \eqref{eq8} into a more complicated diagram
\begin{equation}\label{eq9}
\vcenter{
\xymatrix{
&\bar V_1\ar[rr]^{\bar j_1}\ar[ddl]^(.3){\bar p}&& \bar U_1\ar[ddl]^(.3){\bar q}&\bar Z\ar[l]_{\bar i_1}\ar[ddl]^(.3){\bar r}\\
V_1\ar[rr]^{j_1}\ar[d]^p\ar[ur]^{j'}&& U_1\ar[d]^q\ar[ur]^{j''}&Z\ar[l]_{i_1}\ar[d]_=\ar[ur]^{u}\\
V\ar[rr]^j&& U&Z\ar[l]_i
}}
\end{equation}
where $j''$ is an open immersion, $\bar q$ is finite (since $q$ is étale), $\bar V_1=V\times_{U} \bar U_1$, $\bar Z=\bar U_1-\bar V_1$ and the other arrows follow. In particular, $j'$ and $u$ are also open immersions, $\bar Z$ is closed and $\bar p$, $\bar r$ are also finite.

Some observations on this diagram:

\begin{itemize}
\item Finite $\Rightarrow$ affine, hence $\bar U_1$,  $\bar V_1$ are affine. All vertices of \eqref{eq9} are affine.
\item In particular, the closed immersion $\bar i_1$ is purely of codimension $1$ \cite[cor. 21.12.7]{EGA4}.
\item Since $\bar r$ is separated, the open immersion $u$ is also closed, hence $\bar Z=Z\coprod T$ for some other closed subset $T$ (of pure codimension $1$).
\item Since $\bar U_1$ and $\bar V_1$ are normal surfaces, $\bar p$ and $\bar q$ are \emph{flat} \cite[Ex. 8.2.15]{liu}.
\end{itemize}

Since $\bar p,\bar q$ are finite and flat, trace maps are available in étale cohomology \cite[Th. 6.2.3]{sga4-17} and we have a commutative diagram
\begin{equation}\label{eq10}
\vcenter{
\xymatrix{
H^2(V,1)\ar[d]^{\partial}& H^2(\bar V_1,1)\ar[l]_{\bar p_*}\ar[d]^{\bar\partial_1}\ar[r]^{{j'}^*}& H^2(V_1,1)\ar[d]^{\partial_1}\\
H^3_{Z}(U,1)& H^3_{Z}(\bar U_1,1)\oplus H^3_{T}(\bar U_1,1)\ar[l]_(0.6){\bar q_*}\ar[r]^(0.6){{j''}^*}& H^3_{Z}(U_1,1)\\
&H^1(Z,0)\ar[ul]^\sim\ar[ur]_\sim\ar[u]^a
}}
\end{equation}
where $\partial_1$ is surjective as seen above, and $a$ is the isomorphism on the first summand defined by excision ($H^3_{Z}(\bar U_1,1)\iso H^3_{Z}(U_1,1)$),  and $0$ on the second. The left square commutes e.g. by proper (finite)  base change. This implies:

\begin{cor} \label{l11}  $\IM\partial\supseteq \IM(\bar q_*\circ \bar \partial_1)$.\qed 
\end{cor}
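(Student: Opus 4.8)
The plan is simply to read Corollary~\ref{l11} off the commutativity of the left-hand square of \eqref{eq10}. That square asserts the identity
\[\partial\circ\bar p_* = \bar q_*\circ\bar\partial_1\colon H^2(\bar V_1,1)\longrightarrow H^3_Z(U,1),\]
and once this identity is available, taking images on both sides yields $\IM(\bar q_*\circ\bar\partial_1)=\IM(\partial\circ\bar p_*)\subseteq\IM\partial$, which is exactly the assertion. So there is nothing to prove beyond justifying the commutativity of that square.

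To see the commutativity, I would exhibit $\bar p_*$ and $\bar q_*$ as part of a morphism of localization long exact sequences. The pair $(\bar U_1,\bar V_1)$ with reduced closed complement $\bar Z$ gives a connecting map $\bar\partial_1\colon H^2(\bar V_1,1)\to H^3_{\bar Z}(\bar U_1,1)$; since $\bar Z=Z\coprod T$ with $Z,T$ disjoint closed subsets, excision splits the target as $H^3_Z(\bar U_1,1)\oplus H^3_T(\bar U_1,1)$, matching the middle column of \eqref{eq10}. On the base, the localization sequence for $(U,V)$ with support $Z$ supplies $\partial$. Because $\bar q$ is finite — hence proper — and flat, and the defining square $\bar V_1=V\times_U\bar U_1$ is Cartesian with $\bar q^{-1}(Z)=\bar Z$, the trace maps $\bar p_*$, $\bar q_*$ assemble into a map from the first localization sequence to the second; that $\bar q_*$ intertwines $\bar\partial_1$ with $\partial$ then follows from proper base change together with the naturality of the trace map in étale cohomology (\cite[Th.\ 6.2.3]{sga4-17}). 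This is the content of the phrase ``the left square commutes e.g.\ by proper (finite) base change'' in \eqref{eq10}.

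I do not expect a genuine obstacle: the corollary is formal given \eqref{eq10}. The only point requiring care is the bookkeeping around $\bar Z$. One must note that $u\colon Z\inj\bar Z$ is at once an open and a closed immersion (as $\bar r$ is separated), so that $Z$ really occurs as a direct summand of $\bar Z$, giving $\bar Z=Z\coprod T$; and one should check that, under the excision/purity isomorphisms $H^1(Z,0)\iso H^3_Z(\bar U_1,1)$ and $H^1(Z,0)\iso H^3_Z(U,1)$, the map $\bar q_*$ restricted to the $H^3_Z(\bar U_1,1)$-summand is the identity of $H^1(Z,0)$, because $\bar r$ restricts to the identity of $Z$ on that summand. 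This is precisely the commuting bottom triangle of \eqref{eq10}, and it is what makes the map $a$ well defined; with it, the displayed identity — and hence Corollary~\ref{l11} — is immediate.
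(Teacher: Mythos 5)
Your proposal is correct and is essentially the paper's own argument: the corollary is read off directly from the commutativity of the left square of \eqref{eq10} (that is, $\partial\circ\bar p_*=\bar q_*\circ\bar\partial_1$, so $\IM(\bar q_*\circ\bar\partial_1)\subseteq\IM\partial$), with that commutativity justified by proper (finite) base change exactly as the paper indicates. The additional bookkeeping you supply about $\bar Z=Z\coprod T$ and the map $a$ is consistent with the paper's setup and does not change the route.
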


\subsubsection{Third part} \label{3.2.3}
If we could show that $\IM \bar\partial_1\supseteq \IM a$, we would win. We would like to use the surjectivity of $\partial_1$, but it is not sufficient. Analysing Gillet's arguments, things would work if
\begin{enumerate}
\item  the composition
\[\bar i_{1,Z}^*:H^1(\bar U_1,0)\by{{j''}^*} H^1(U_1,0)\by{i_1^*} H^1(Z,0)\]
is surjective, and
\item $\exists$ $f_1\in \Gamma(\bar U_1,\G_a)$ such that $Z$ is principal of equation $f_1$ in $\bar U_1$, and $f_1\equiv 1\pmod{T}$.
\end{enumerate}

(2) looks very expensive (see \ref{3.2.4} below), but maybe (1) can be achieved by enlarging $U_1$. Specifically, we may ask the following

\begin{qn} Let $\bar U_h$ be the normalisation of $U$ in $U_h\to U$. Is the composition
\begin{equation}\label{eq1}
H^1(\bar U_h,0)\to H^1(U_h,0)\iso H^1(Z,0)
\end{equation}
still surjective?
\end{qn}

Note that this composition is injective by \cite[lemme 3.6 and rem. 3.7]{sga2}, and that surjectivity holds anyway for $l$ large enough, because it holds for $i^*$ by Proposition \ref{p3} c).

\subsubsection{Sone negative ``evidence'' in dimension $1$}\label{3.2.4} Instead of a surface, consider a smooth affine $k$-curve $U$ such that $\Pic(U)=0$, and let $Z\in U$ be a closed point. There is a trivial way to make $H^1(U,0)\to H^1(Z,0)$ surjective: extend scalars from $k$ to $k(Z)$ so that $U_{k(Z)}$ acquires a rational point above $Z$, and shrink $U$ in order to throw away the extra points above $Z$. 

The analogue to Condition (1) in \ref{3.2.3} is obviously verified. However, even though $\Pic(U)=0$, $\Pic(U_{k(Z)})$ will not vanish in general because $\Pic(U_{k_s})$ is infinite as soon as the smooth completion of $U$ has positive genus. Thus Condition (2) may well fail. There is probably an explicit example with $U$ an affine elliptic curve.

\subsection{Third idea:  from below} 

This idea is inspired by Gabber's geometric presentation lemma, used to prove Gersten's conjecture (\cite{gabber2}, \cite[th. 3.1.1]{cthk}).

Suppose that we can construct an ``ante-Nisnevich neighbourhood'' of $i$:
\begin{equation}\label{eq11}
\vcenter{
\xymatrix{
Z\ar[dr]_{i_1}\ar[r]^i&U\ar[d]^v\\
&U_1}
}
\end{equation}
where $i_1$ is a closed immersion, $v$ is a Nisnevich neighbourhood of $Z$ and $U_1$ is affine open in a smooth projective surface for which Tate's conjecture is known. Then  $(i_1)_*=0$, hence $i_*=v^*(i_1)_*=0$ by functoriality of the Gysin maps. 

In fact, ``Nisnevich neighbourhood'' is not necessary: by the functoriality of Gysin morphisms, $v$ may be any morphism such that
\begin{equation}\label{eq12}
Z= v^{-1}(\overline{v(Z)}),\quad Z\iso \overline{v(Z)}
\end{equation}
(scheme-theoretically). Moreover, $v(Z)$ is constructible by Chevalley's theorem, but $Z$ is a curve, hence $v(Z)$ is open in its closure.

Gabber's lemma achieves this with $U_1=\A^2$, but up to an open subset. A version over finite fields was given by Hogadi-Kulkarni \cite[Lemma 2.4 and Rem. 1.3]{hogadi}:

\begin{prop}
There exists a morphism $v:U\to \A^2$ and an open subset $W\subseteq \A^2$ such that
\begin{enumerate}
\item $v_{|v^{-1}(W)}$ is \'etale
\item $Z\cap v^{-1}(W)\by{v} W$ is a closed immersion.
\end{enumerate}
\end{prop}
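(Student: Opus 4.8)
The plan is to produce $v$ as a projection to $\A^2$ after a closed embedding of $U$ into an affine space, the projection being selected by a Bertini-type argument over the finite field $k$; this is Gabber's geometric presentation lemma, and the finite-field subtleties are exactly those dealt with in \cite{hogadi}. I only sketch the shape of the argument.

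Fix a closed immersion $U\inj \A^N$ (possible since $U$ is affine of finite type over $k$, and $N$ may be taken as large as one wishes), with affine coordinates $x_1,\dots,x_N$ that restrict to regular functions on $U$. One looks for $v=(f_1,f_2):U\to\A^2$, with $f_1,f_2\in\Gamma(U,\G_a)$, such that (a) $f_1|_Z$ and $f_2|_Z$ generate the field $k(Z)$ over $k$, and (b) the section $df_1\wedge df_2$ of the line bundle $\Omega^2_{U/k}$ is not identically zero. Under (a), $v|_Z:Z\to\A^2$ is birational onto its image $\Gamma:=\overline{v(Z)}$, so $v|_Z$ restricts to an isomorphism of a dense open $Z^0\subseteq Z$ onto a dense open of $\Gamma$, hence to a locally closed immersion on $Z^0$; under (b), $v$ is \'etale on a dense open $U^{\et}\subseteq U$. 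Granting such a pair, set $C:=\overline{v(U\setminus U^{\et})}$, which is contained in a curve because $U\setminus U^{\et}$ has dimension $\le 1$ in the surface $U$; then take $W$ to be $\A^2\setminus C$ with the finitely many points of $(\Gamma\setminus v(Z^0))\cup v(Z\setminus Z^0)$ removed (these sets are finite, since $v(Z^0)$ is a constructible dense subset of the curve $\Gamma$ and $Z\setminus Z^0$ is finite). Then $W$ is a nonempty open subset of $\A^2$ over which $v$ is \'etale, $Z\cap v^{-1}(W)$ coincides with $Z^0\cap v^{-1}(W)$, and $v$ carries the latter isomorphically onto $\Gamma\cap W$, which is closed in $W$; this gives (1) and (2). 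The only delicate point here is that $W$ be nonempty; the rest is bookkeeping with constructible sets, using that $v(Z)$ is a curve, open in its closure.

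It remains to construct $f_1,f_2$ satisfying (a) and (b). Over an \emph{infinite} field this is a plain dimension count: one takes $f_1,f_2$ to be generic linear forms in $x_1,\dots,x_N$ for $N$ large; the secant variety of the curve $Z\subset\A^N$ has dimension $\le 3$ and the locus swept out by the embedded tangent planes of the surface $U\subset\A^N$ has dimension $\le 4$, so a generic linear projection $\A^N\to\A^2$ is \'etale on a dense open of $U$ and a locally closed immersion on a dense open of $Z$. Over $k=\F_q$ this is not available, and one must instead let $f_1,f_2$ range over the polynomials of large degree $e$ in $x_1,\dots,x_N$ and apply Poonen's closed-point sieve \cite{poonen}: one verifies that ``$k(Z)$ is separable over $k(f_1|_Z)$'', then that ``$f_2|_Z$ generates $k(Z)$ over $k(f_1|_Z)$'' (so that (a) holds), and that ``$df_1\wedge df_2\not\equiv 0$ on $U$'' (which is (b)), each for $(f_1,f_2)$ in a subset of positive density, so that a common choice exists. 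This simultaneous Bertini statement over a finite field is the real content of the proposition, and the step I expect to be the main obstacle; it is carried out in \cite{hogadi}. It is also the reason one obtains only an open $W\subseteq\A^2$, rather than $Z\subseteq v^{-1}(W)$: the sieve controls the behaviour of $v$ only over dense opens of $Z$ and of $U$.
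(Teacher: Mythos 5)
Your sketch is consistent with the paper, which in fact gives no proof of this proposition at all: it is stated as a direct quotation of Hogadi--Kulkarni \cite[Lemma 2.4 and Rem.~1.3]{hogadi}, so the entire content rests on that reference. Your outline --- reduce to finding $f_1,f_2$ with $v=(f_1,f_2)$ generically étale on $U$ and birational on $Z$, then shrink $W$ to discard the bad loci; establish existence of such a pair over $\F_q$ by Poonen's closed-point sieve --- is a faithful reconstruction of the cited argument and correctly identifies the simultaneous Bertini statement over a finite field as the real content, which you, like the paper, ultimately defer to \cite{hogadi}.
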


But we cannot afford to ``lose'' a closed subset in $U$ (of codimension $2$, à la rigueur\dots): we would start running into circles. Thus, let us look at $v$ on the whole of $U$. The second condition of \eqref{eq12} is achieved\footnote{Essentially, because $\overline{v(Z)}$ might be singular; but this concerns only a finite set of closed points.}, but not the first: there can be extra components -- and there will be in general, because $v$ has generic degree $>1$ unless it is birational\dots\ see \ref{3.3.1} below for a definite counterexample in the analogous dimension $1$ case.

This problem  is similar to the one encountered in the second idea!

\subsubsection{Sone negative ``evidence'' in dimension $1$}\label{3.3.1} Let $(U,Z)$ be as in \S \ref{3.2.4} and let  $X$ be the smooth projective completion of $U$, assumed to be of genus $>0$.

\begin{prop} Let $N=|(X-U)(k_s)|$. If $\deg(Z)>N$, there is no diagram \eqref{eq11} verifying \eqref{eq12} with $d=\deg(v)>1$ (generic degree).
\end{prop}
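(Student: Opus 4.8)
The plan is to derive a numerical contradiction from the existence of a diagram \eqref{eq11} satisfying \eqref{eq12} by comparing the arithmetic of $U$ with that of its image. Set $U_1$ affine in a smooth projective curve $X_1$, write $v:U\to U_1$ for the given morphism, let $Z_1=\overline{v(Z)}\subseteq U_1$ (so $Z_1$ is a reduced closed point or finite set of closed points, identified with $Z$ via \eqref{eq12}), and let $V=U\setminus Z$, $V_1=U_1\setminus Z_1$. The key point is that \eqref{eq12} forces $v^{-1}(V_1)=V$, so $v$ restricts to a finite (or at least quasi-finite, dominant) map $V\to V_1$ of degree $d$.

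**First I would** pass to a finite extension $k'/k$ large enough that all points of $X\setminus U$ and of $Z$ are rational, and replace everything by its base change; this only helps, since $N$ and $\deg(Z)$ are defined over $k_s$ and degrees are unchanged under base change. Then $v$ extends to a finite morphism $\bar v:X\to X_1$ of smooth projective curves of the same degree $d$ (take the normalisation of $X_1$ in the function field $k(U)$; since $X$ is already the smooth completion of $U$, this is $X$). **Next I would** locate the cusp: because $Z\iso Z_1$ scheme-theoretically and $v^{-1}(Z_1)=Z$ with no other components, the fibre of $\bar v$ over (each point of) $Z_1$, computed on $X$, is supported exactly on $Z$; since $\deg \bar v^{-1}(\text{pt}) = d$ as a divisor, and the fibre over $Z_1$ reduced is $Z$ reduced, this says the ramification of $\bar v$ is concentrated so that $\bar v^{-1}(Z_1)=d\cdot(\text{point of }Z)$ — i.e. $\bar v$ is totally ramified over $Z_1$ — OR $Z$ has several points each appearing with the right multiplicity; in all cases, over $k_s$, $\bar v^{-1}(Z_{1,s})$ is set-theoretically $Z_s$.

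**The crux** is then Riemann–Hurwitz together with a count of where the points $X\setminus U$ can go. Since $v(V)\subseteq V_1$ and $v^{-1}(V_1)=V$, we get $\bar v^{-1}(X_1\setminus U_1)\subseteq X\setminus U$, so the finite set $X_1\setminus U_1$ has total preimage of cardinality $\le N$ over $k_s$; as $\bar v$ has degree $d$, this forces $|X_1\setminus U_1|\cdot 1 \le |(X\setminus U)(k_s)| = N$ while each such point has $d$ preimages counted with multiplicity — so in particular $d\le N/|X_1\setminus U_1|\le N$... but we must also feed in $\deg(Z)$. Here I would use that $Z_s$ has $\deg(Z)$ points (after the base change, or: $\deg Z$ geometric points), all lying over the point(s) $Z_{1,s}$; the fibre of $\bar v$ over $Z_{1,s}$, counted with multiplicity, has size $d$, so $\deg(Z)\le d$ when the fibre is reduced, and in general the reduced fibre has $\le d$ points, giving $\deg(Z)\le d$ only if unramified there — whereas the hypothesis $\deg(Z)>N\ge d$ (once we establish $d\le N$) is the contradiction. **The main obstacle** I anticipate is pinning down precisely why $d\le N$: this should come from the requirement that $v$ is a \emph{morphism} $U\to U_1$ (not merely a rational map), so that $\bar v$ maps $X\setminus U$ into $X_1\setminus U_1$ and hence no point of $U$ maps into the boundary of $U_1$; combined with $\bar v$ being surjective of degree $d$, the preimage of each of the $\ge 1$ boundary points of $U_1$ contributes $d$ to the count (with multiplicity) but lands inside the $N$-element set $(X\setminus U)(k_s)$, so $d\le N$. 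Then $\deg(Z)>N\ge d$ contradicts $\deg(Z)\le d$ (the fibre of a degree-$d$ map over a point has at most $d$ geometric points), completing the proof. I would present the boundary-count and the fibre-count as two short lemmas and combine them at the end.
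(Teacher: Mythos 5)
There is a genuine gap, and it sits exactly where the actual content of the proof lies. Your step ``the fibre of $\bar v$ over (each point of) $Z_1$, computed on $X$, is supported exactly on $Z$'' is false: condition \eqref{eq12} only controls $v^{-1}(\overline{v(Z)})$ \emph{inside $U$}. Since the scheme-theoretic fibre of $v$ over each geometric point of $\overline{v(Z)}$ is a single reduced point of $Z$, that point occurs with multiplicity $1$ in the degree-$d$ fibre of the extension $\bar v:X\to X_1$; the remaining degree $d-1>0$ must therefore be supported on $X-U$. Far from being negligible, these unavoidable extra boundary points are the whole mechanism of the paper's proof: after extending scalars so that $Z$ becomes $\delta=\deg(Z)$ rational points $Z_i$ with distinct images $v(Z_i)$ (distinct because $Z\iso\overline{v(Z)}$), each set $\bar v^{-1}(v(Z_i))\setminus\{Z_i\}$ is a non-empty subset of $(X-U)(k_s)$, and these sets are pairwise disjoint since fibres over distinct points are disjoint. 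Hence $\deg(Z)\le N$, contradicting the hypothesis. Your write-up never produces this count.

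The numerical endgame you propose instead does not close. The inequality $d\le N$ does not follow from $\bar v^{-1}(X_1\setminus U_1)\subseteq X\setminus U$: the degree-$d$ fibre over a boundary point of $U_1$ may be concentrated on a single point with multiplicity $d$, so set-theoretically it only forces $N\ge 1$. And the inequality $\deg(Z)\le d$ rests on a misreading of \eqref{eq12}: since $Z\iso\overline{v(Z)}$ scheme-theoretically, the image has $\deg(Z)$ geometric points and each geometric point of $Z$ is \emph{alone} in its fibre over the corresponding point of $\overline{v(Z)}$ (within $U$); the $\deg(Z)$ points of $Z_s$ do not all lie over one point of $X_1$, so no comparison between $\deg(Z)$ and $d$ arises. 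Riemann--Hurwitz, which you announce as the crux, is never actually used and is not needed. The correct argument is the disjoint-fibre count above, which compares $\deg(Z)$ directly with $N$ and never needs to bound $d$ at all.
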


\begin{proof} Suppose that such a diagram exists. Then $v$ induces a ramified covering $\bar v:X\to X_1$ of degree $d$, where $X_1$ is the smooth completion of $U_1$. As in \ref{3.2.4}, deploy the situation by extending scalars to $k(Z)$: then $Z$ is replaced by $\delta$ rational points  $Z_i$, where $\delta=\deg(Z)$. By hypothesis, one has $Z_i=v^{-1}(v(Z_i))$ for all $i$. But since $d>1$, $\bar v^{-1}(v(Z_i))-\{Z_i\}$ is non-empty and contained in $(X-U)(k_s)$. When $i$ varies, these sets are disjoint. Contradiction.
\end{proof}

On the other hand, if $Z$ is a rational point, the construction works by using the vanishing of $\Pic(U)$: if $f\in k(U)$ is an equation of $Z$, it suffices to project $U$ to $\A^1_{k}$ via $f$.

I cannot create counter-examples such as \ref{3.2.4} and \ref{3.3.1} in dimension $2$...


\begin{thebibliography}{I}
\bibitem{ambrosi} E. Ambrosi {\it A note on the behaviour of the Tate conjecture under finitely generated field extensions}, Pure Appl. Math. Q. {\bf 14} (2018), 515--527.
\bibitem{sga4} M. Artin {\it Théorème de finitude pour un morphisme propre ; dimension cohomologique des
schémas algébriques affines}, exposé XIV of Théorie des topos et cohomologie étale des schémas (SGA 4), tome 3, Lect. Notes in Math. {\bf 305}, Springer, 1973, 145--167.
\bibitem{cthk} J.-L. Colliot-Thélène, R. Hoobler, B. Kahn {\it The Bloch-Ogus--Gabber theorem}, Fields Inst. Comm. Series {\bf 16}, A.M.S., 1997, 31--94.
\bibitem{sga4-17} P. Deligne {\it Cohomologie à supports propres}, exposé XVII of Théorie des topos et cohomologie étale des schémas (SGA 4), tome 3, Lect. Notes in Math. {\bf 305}, Springer, 1973, 250--461.
\bibitem{fulton} W. Fulton Intersection theory (2ème éd.), Springer, 1998.
\bibitem{gabber2} O. Gabber {\it Gersten's conjecture for some complexes of vanishing cycles}, Manuscripta Math. {\bf 85} (1994), 323--343.
\bibitem{gabber} O. Gabber {\it Affine analog of the proper base change theorem}, Isr. J. Math.  {\bf 87} (1994), 325--335.
\bibitem{gillet} H. Gillet {\it Gersten's conjecture for the $K$-Theory with torsion coefficients of a discrete valuation ring}, J. Alg. {\bf 103} (1986), 377--380.
\bibitem{sga2} A. Grothendieck {\it Application au groupe fondamental}, exp. X  {\it in} Cohomologie locale des faisceaux cohérents  et théorèmes de Lefschetz locaux et globaux (SGA 2), édition recomposée et annotée, SMF, 2005, 89--98.
\bibitem{cycle} A. Grothendieck {\it La classe de cohomologie associée à un cycle} (rédigé par P. Deligne), {\it in} Cohomologie étale (SGA 4 1/2), Lect. Notes in Math. {\bf 569}, Springer, 1977, 129--153.
\bibitem{EGA4} A. Grothendieck, J. Dieudonné {\it \'Eléments de géométrie algébrique, IV: étude locale des schémas et des morphismes de schémas}, Publ. Math. IH\'ES {\bf 28} (1966) et {\bf 32} (1967).
\bibitem{hogadi} A. Hogadi, G. Kulkarni {\it Gabber’s presentation lemma for finite fields}, J. reine angew. Math. {\bf 759} (2020), 265--289.
\bibitem{purete} B. Kahn {\it Résultats de "pureté'' pour les variétés lisses sur un corps fini}, NATO Adv. Sci. Inst. Ser. C: Math. Phys. Sci., {\bf 407}, Algebraic K-theory and algebraic topology (Lake Louise, AB, 1991), 57–62, Kluwer Acad. Publ., Dordrecht, 1993, \url{https://webusers.imj-prg.fr/~bruno.kahn/preprints/purete-lisses-corps-fini.pdf}.
\bibitem{kahness} B. Kahn {\it Sur la conjecture de Tate pour les diviseurs}, Ess. Numb. Theory {\bf 2} (2023), 83--92.
\bibitem{liu} Q. Liu Algebaric geometry and arithmetic curves, Oxford Univ. Press, 2002.
\bibitem{poonen} B. Poonen {\it Bertini theorems over finite fields}, Ann. of Math. {\bf 160} (2004), 1099--1127.
\bibitem{morrow} M. Morrow {\it A variational Tate conjecture in crystalline cohomology}, J. Eur. Math. Soc. {\bf 21} (2019),  3467--3511.
\bibitem{tate} J. Tate {\it Conjectures on algebraic cycles in $l$-adic cohomology},  {\it in} Motives, Proc. Sympos. Pure Math. {\bf 55} (1), AMS, 1994, 71--83.
\end{thebibliography}
\end{document}